\numberwithin{equation}{section}
\newtheorem{thm}{Theorem}[section]
\newtheorem{lemma}[thm]{Lemma}
\newtheorem{prop}[thm]{Proposition}
\newtheorem{rmk}[thm]{Remark}
\newcommand{\qed}{\hfill \mbox{\raggedright \rule{.07in}{.1in}}}
\newenvironment{proof}{\vspace{1ex}\noindent{\bf
Proof}\hspace{0.5em}}{\hfill\qed\vspace{1ex}}
\newenvironment{pfof}[1]{\vspace{1ex}\noindent{\bf Proof of
#1}\hspace{0.5em}}{\hfill\qed\vspace{1ex}}
\newcommand{\R}{{\mathbb R}}
\newcommand{\E}{{\mathbb E}}
 \newcommand{\Lip}{\operatorname{Lip}}
\newcommand{\SMALL}{\textstyle}
\newcommand{\dotx}{{\dot x}^{(\epsilon)}}
\newcommand{\doty}{{\dot y}^{(\epsilon)}}
\newcommand{\dotz}{{\dot z}^{(\epsilon)}}
\newcommand{\x}{x^{(\epsilon)}}
\newcommand{\xR}{x^{(\epsilon),R}}
\newcommand{\y}{y^{(\epsilon)}}
\newcommand{\z}{z^{(\epsilon)}}
\newcommand{\taue}{\tau_{\epsilon}}
\newcommand{\taueR}{\tau_{\epsilon,R}}
\title{Homogenization for Deterministic Maps and Multiplicative Noise}
\author{
Georg A. Gottwald\thanks{School of Mathematics and Statistics, University of Sydney, Sydney 2006 NSW, Australia georg.gottwald@sydney.edu.au}
\and
Ian Melbourne\thanks{Mathematics Institute, University of Warwick, Coventry, CV4 7AL, UK  i.melbourne@warwick.ac.uk}
}
\date{28 November 2012, updated 28 April 2015.
\\ \normalsize This paper
corrects an error in the published version of the paper
\\ which appeared in 
{\em Proc.\ Roy.\ Soc. London A} \textbf{469} (2013) 20130201.}
\begin{document}

\maketitle

 \begin{abstract}
A recent paper of Melbourne \& Stuart,
A note on diffusion limits of chaotic skew product flows, \emph{Nonlinearity} {\bf 24} (2011) 1361--1367, gives a rigorous proof of convergence of a fast-slow deterministic system to a stochastic differential equation with additive noise.
In contrast to other approaches, the assumptions on the fast flow are very mild.

In this paper, we extend this result from continuous time to discrete time. Moreover 
we show how to deal with one-dimensional multiplicative noise.   This raises the issue of how to interpret certain stochastic integrals; it is proved that the integrals are of Stratonovich type for continuous time and neither Stratonovich nor It\^o for discrete time.   

We also provide a rigorous derivation of superdiffusive limit where the stochastic differential equation is driven by a stable L\'evy process.   In the case of one-dimensional multiplicative noise, the stochastic integrals are of Marcus type both in the discrete and continuous time contexts.

The original version included an incorrect argument based on the paper of Melbourne \& Stuart.  In accordance with an updated version of the latter, we give the correct argument and 
remove an unnecessary large deviation assumption.
 \end{abstract}

\paragraph{Keywords}  homogenisation, deterministic maps and flows, multiplicative noise, Stratonovich/It\^o/Marcus stochastic integrals

\section{Introduction}

There is considerable interest in understanding how stochastic behaviour emerges from deterministic systems, both in the mathematics and applications literature.   A simple mechanism for emergent stochastic behaviour is via homogenization of multiscale systems, see for example~\cite{PavliotisStuart}.

Recently, Melbourne \& Stuart~\cite{MS11} embarked on a programme to develop a rigorous theory of homogenization based on new ideas in the theory of dynamical systems.  The aim is to avoid excessive mixing assumptions on the fast dynamics, since these are very difficult to establish even for uniformly hyperbolic (Axiom A or Anosov) flows (general references for the ergodic theory of uniformly hyperbolic maps and flows include~\cite{Bowen75,BowenRuelle75,Ruelle78,Sinai72}).   Instead, the theory relies only on relatively mild statistical properties that are known to hold very widely and are independent of mixing assumptions (see~\cite{MS11} or Remark~\ref{rmk-Young} below for further details).

The mechanism for emergent stochastic behaviour in deterministic systems, whereby fast chaotic dynamics induces white noise in the slow variables, is much-studied in the applied literature, see for example~\cite{Beck90,JKRH01,MT00,MTV06,PavliotisStuart}.
See also the program outlined by~\cite{Mackay10}.  The aim here, continuing and extending the work in~\cite{MS11}, is to obtain rigorous results for large classes of fast-slow systems under unusually mild assumptions.

In particular,~\cite{MS11} studied fast-slow ODEs of the form 
\begin{align} \label{eq-MS} \nonumber
\dotx & = \epsilon^{-1}f_0(\y)+f(\x,\y), \quad \x(0)=\xi \\
\doty & = \epsilon^{-2}g(\y), \quad \y(0)=\eta,
\end{align}
where $\x\in\R^d$, $\y\in\R^\ell$, $\epsilon>0$.
It is assumed that
the vector fields $f_0:\R^\ell\to\R^d$, $f:\R^d\times\R^\ell\to\R^d$ and $g:\R^\ell\to\R^\ell$ satisfy certain regularity conditions and that the fast $y$
dynamics possesses a compact attractor $\Lambda\subset\R^\ell$ with ergodic 
invariant probability measure $\mu$ satisfying certain mild chaoticity assumptions.   Finally, it is required that $\int_\Lambda f_0\,d\mu=0$ so that we are in the situation of homogenization rather than averaging.
The conclusion~\cite{MS11} is that $\x\to_w X$ in
$C([0,\infty),\R^d)$ as $\epsilon\to0$, where $X$ is the 
solution to a stochastic differential equation (SDE) of the form 
\begin{align} \label{eq-SDE}
dX=\sqrt\Sigma\, dW+F(X)\,dt, \quad X(0)=\xi.
\end{align}
Here $W$ is unit $d$-dimensional Brownian motion
and $F(x,y)=\int_\Lambda f(x,y)\,d\mu(y)$.
(Throughout, we use $\to_w$ to denote weak convergence in the sense of probability measures~\cite{Billingsley1999}.)

In this paper, we consider the twin goals of (i) allowing multiplicative noise when
$d=1$,
and (ii) proving analogous results for discrete time to those for flows.  

In the introduction, we will focus on the case of discrete 
time.  First we consider the case where there is no multiplicative noise.
Consider the equation
\begin{align} \label{eq-map_add}
\x(n+1)=\x(n)+\epsilon f_0(y(n))+\epsilon^2 f(\x(n),y(n),\epsilon), \quad \x(0)=\xi,
\end{align}
where $\x(n)\in\R^d$ and the fast variables $y(n)$ are generated by a map $g:\R^\ell\to\R^\ell$ with
compact attractor $\Lambda\subset\R^\ell$ and ergodic invariant measure $\mu$.
We require that $\Lambda$ is {\em mildly chaotic} as in~\cite{MS11}.  That is, 
we assume 
a weak invariance principle (WIP).
The precise definitions are recalled in Section~\ref{sec-fast}. A consequence is that
$n^{-1/2}\sum_{j=0}^{n-1}f_0(y(j))$ converges in distribution as $n\to\infty$ to
a $d$-dimensional normal distribution with mean zero and $d\times d$ covariance 
matrix $\Sigma$.

Define $\hat x^{(\epsilon)}(t)=\x(t\epsilon^{-2})$ for
$t=0,\epsilon^2,2\epsilon^2,\ldots$ and linearly interpolate
to obtain $\hat x^{(\epsilon)}\in C([0,\infty),\R^d)$.

Our first main result is a direct analogue of the continuous time 
result of Melbourne \& Stuart~\cite[Theorem~1.1]{MS11}.

\begin{thm}    \label{thm-add_map}
Consider equation~\eqref{eq-map_add}.
Assume that $f_0$ and $f$ are locally Lipschitz in $x$ and $y$, 
and that
$\lim_{\epsilon\to0} f(x,y,\epsilon)=f(x,y,0)$
uniformly on compact subsets of $\R^d\times\Lambda$.
Assume that $\Lambda$ satisfies the WIP as described below.
Suppose that $\int_\Lambda f_0\,d\mu=0$ and 
set $F(x)=\int_\Lambda f(x,y,0)\,d\mu(y)$.

Suppose that solutions $X$ to the SDE~\eqref{eq-SDE} exist on $[0,\infty)$
with probability one.
Then $\hat x^{(\epsilon)}\to_w X$ in $C([0,\infty),\R^d)$ 
as $\epsilon\to0$.
\end{thm}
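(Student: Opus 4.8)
The plan is to reduce the convergence to a continuous-mapping argument, exploiting the fact that for additive noise the limiting SDE~\eqref{eq-SDE} defines its solution as an \emph{explicitly continuous} functional of the driving path, so that no stochastic calculus is needed. Summing the recursion~\eqref{eq-map_add}, the interpolated process satisfies, at the grid points $t=N\epsilon^2$,
\[
\hat x^{(\epsilon)}(t) = \xi + W^{(\epsilon)}(t) + \epsilon^2\sum_{j=0}^{N-1} f(\x(j),y(j),\epsilon),
\]
where $N=\lfloor t\epsilon^{-2}\rfloor$ and $W^{(\epsilon)}(t)=\epsilon\sum_{j=0}^{N-1} f_0(y(j))$. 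By the WIP, $W^{(\epsilon)}\to_w\sqrt\Sigma\,W$ in $C([0,\infty),\R^d)$, which supplies the noise term; the whole difficulty is to show that the remaining sum converges to the drift $\int_0^t F(\hat x^{(\epsilon)}(s))\,ds$ with an error that is negligible in probability, uniformly on compact time intervals.

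For the drift I would first replace $f(\cdot,\cdot,\epsilon)$ by $f(\cdot,\cdot,0)$ using the hypothesis that $f(x,y,\epsilon)\to f(x,y,0)$ uniformly on compacts, and then split $f(x,y,0)=F(x)+h(x,y)$ with $\int_\Lambda h(x,\cdot)\,d\mu=0$. The term $\epsilon^2\sum_{j<N} F(\x(j))$ is a Riemann sum for $\int_0^t F(\hat x^{(\epsilon)}(s))\,ds$, with error $O(\epsilon)$ since $\hat x^{(\epsilon)}(j\epsilon^2)=\x(j)$ and $\x$ moves by $O(\epsilon)$ per step. The genuinely delicate term is the fluctuation $\epsilon^2\sum_{j<N} h(\x(j),y(j))$. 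Because $\x$ is slowly varying while the Birkhoff sums of $h(x,\cdot)$ obey a central-limit-type bound, I would freeze $x$ on short blocks (or use summation by parts) to transfer the estimate onto the partial sums $\sum_{j<n} h(x,y(j))$; the LDP then gives $\max_{n\le N}\bigl|\sum_{j<n}h(x,y(j))\bigr|=O_p(\epsilon^{-1})$ uniformly in $x$ over compact sets, using local Lipschitz continuity of $h$ in $x$ together with a finite cover, so that after multiplication by $\epsilon^2$ the fluctuation is $O_p(\epsilon)$ and vanishes.

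Since $f$ and $F$ are only locally Lipschitz and solutions are only assumed to exist globally with probability one, I would perform the above inside a localization. Introduce the truncated process $\xR$ stopped when $|\x|$ exceeds $R$, together with a globally Lipschitz modification $F_R$ agreeing with $F$ on the ball of radius $R$. For the truncated problem the solution map $w\mapsto Z$, $Z(t)=\xi+w(t)+\int_0^t F_R(Z(s))\,ds$, is Lipschitz in the sup norm by Gronwall, hence continuous; combined with $W^{(\epsilon)}\to_w\sqrt\Sigma\,W$ and the drift estimate of the previous paragraph, the continuous mapping theorem yields $\hat x^{(\epsilon),R}\to_w X^R$, the solution of the $R$-truncated SDE. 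Finally I would let $R\to\infty$: the WIP/LDP bounds give tightness of $\{\hat x^{(\epsilon)}\}$ and control the probability that the truncation is active before a fixed time, while the assumed almost-sure global existence of $X$ gives $X^R\to X$; a standard interchange of the $\epsilon\to0$ and $R\to\infty$ limits then removes the truncation and delivers $\hat x^{(\epsilon)}\to_w X$.

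The main obstacle will be the uniform-in-$x$ control of the drift fluctuation $\epsilon^2\sum_{j<N}h(\x(j),y(j))$: one must decouple the slow feedback of $\x$ into its own coefficient from the fast averaging of $h$, and obtain the maximal fluctuation bound uniformly over the range of $\x$ rather than along a single frozen trajectory. This is precisely where the LDP, rather than the WIP, is \emph{indispensable}, since it supplies the required uniform maximal bounds on the fast Birkhoff sums. By contrast the noise term is handled essentially for free by the WIP, and the additive structure of~\eqref{eq-map_add} means the passage to the limit never confronts the It\^o/Stratonovich ambiguity that arises in the multiplicative case treated later in the paper.
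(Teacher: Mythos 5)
Your overall architecture is the same as the paper's: sum the recursion, split the coupling term as $f(x,y,0)=F(x)+Q(x,y)$ with $\int_\Lambda Q(x,\cdot)\,d\mu=0$ (your $h$), absorb the Riemann-sum and $\epsilon$-dependence errors, and conclude via the continuous Gronwall solution map $u\mapsto v$, $v(t)=\xi+u(t)+\int_0^t F(v(s))\,ds$, applied to $W^{(\epsilon)}+{}$errors $\to_w\sqrt\Sigma\,W$; the localization is likewise the paper's reduction (Section~\ref{sec-proof} invokes the argument of Theorem~\ref{thm-reg}). Two remarks on your truncation removal: for maps there is no finite-time blow-up, so the truncation is needed only because $f$ fails to be globally Lipschitz; and the paper does \emph{not} use tightness of the untruncated family $\{\hat x^{(\epsilon)}\}$ (which you invoke and which is not readily available without global bounds) — instead it applies the continuous mapping theorem to the exit-time functional to get $\taueR\to_d\tau_R$ from $x^{(\epsilon),R}\to_w X^R$, and uses the assumed almost-sure global existence of $X$ to send $\tau_R\to\infty$ (Proposition~\ref{prop-R}); that is the precise form of your ``standard interchange'' and you should route the argument through it.

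The one step that would fail as written is your claim that the LDP yields $\max_{n\le N}\bigl|\sum_{j<n}Q(x,y(j))\bigr|=O_p(\epsilon^{-1})$ uniformly in $x$. With $N=t\epsilon^{-2}$ this is a CLT-scale maximal bound $O_p(N^{1/2})$, and the LDP gives nothing of the sort: it only bounds the probability that a Birkhoff \emph{average} deviates from its mean, i.e. it yields $\sum_{j<n}Q(x,y(j))=o_p(n)$ with no rate, and an $O_p(n^{1/2})$ bound would require WIP-type information for the family of observables $Q(x,\cdot)$, which is not assumed (the WIP is hypothesized only for $f_0$). Your fallback — freezing $x$ on blocks — is exactly what is needed and exactly what the paper does in Lemma~\ref{lem-K2}: blocks of length $\delta\epsilon^{-2}$ with $\delta=\epsilon^{3/2}$, so that the within-block motion of $x^{(\epsilon)}$ costs $O\bigl(\Lip(f)(|f_0|_\infty+|f|_\infty)T\delta\epsilon^{-1}\bigr)=O(\epsilon^{1/2})$; each frozen-block average satisfies $\E|J_n|\le a+2|f|_\infty b(a,\delta\epsilon^{-2})$ by Proposition~\ref{prop-LDP}; and the uniformity over $t\in[0,T]$ comes not from a maximal inequality but from crudely summing $\E|J_n|$ over the $[T/\delta]$ blocks, giving $\E\sup_{t\le T}|K_2^{(\epsilon)}(t)|\le Ta+o(1)$ with $a>0$ arbitrary. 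The upshot is that the fluctuation term is only $o_p(1)$ in $C[0,T]$, not $O_p(\epsilon)$ — which is all the continuous-mapping step requires. Note also that no finite cover in $x$ is needed: the LDP is assumed uniform in $x$ ($b(a,N)$ independent of $x$), which is essential since the frozen points $x^{(\epsilon)}(n\delta\epsilon^{-2})$ are random and not confined a priori to a fixed compact set.
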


\begin{rmk}
(a) In~\cite{MS11} it was assumed that $f$ (and hence $F$)
is globally Lipschitz and so the
condition on global existence for $X$ is automatic.   
In the course of the current paper, such global conditions become rather
excessive, so we relax them from the outset, see Subsection~\ref{sec-reg}.

\noindent(b) In the generality of this paper, including Theorem~\ref{thm-add_map}, it is not possible to write down a formula for the covariance matrix $\Sigma$.   However in many situations, including the Young tower situation~\cite{Young98,Young99} discussed
in Section~\ref{sec-fast}, it is possible to prove convergence of second moments~\cite{MTorok12} leading to the expression
\begin{align} \label{eq-moment}
\Sigma=\lim_{n\to\infty}n^{-1}\int_\Lambda
\Bigl(\sum_{j=0}^{n-1} f_0(y(j))\Bigr)
\Bigl(\sum_{j=0}^{n-1} f_0(y(j))\Bigr)^T
\,d\mu.
\end{align}

Under the additional assumption of summable decay of correlations (which is valid for Young towers that satisfy the WIP and are mixing), we obtain the well-known Green-Kubo formula
\begin{align} \label{eq-GK}
\Sigma=\int_\Lambda f_0 f_0^T\,d\mu+
\sum_{n=1}^\infty \int_\Lambda f_0(y(n))f_0(y(0))^T\,d\mu+
\sum_{n=1}^\infty \int_\Lambda f_0(y(0))f_0(y(n))^T\,d\mu.
\end{align}
In particular, we note that~\eqref{eq-moment} is valid for general uniformly hyperbolic maps and~\eqref{eq-GK} is valid under the additional assumption that the map is mixing.

(We have written formula~\eqref{eq-GK} so that it applies equally to invertible and noninvertible maps. Of course in the case of invertible maps 
$\Sigma= \sum_{n=-\infty}^\infty \int_\Lambda f_0(y(n))f_0(y(0))^T\,d\mu$.)
\end{rmk}

Next, we turn to the case of multiplicative noise in the discrete time
setting with $d=1$.  Consider the equation
\begin{align} \label{eq-map_mult}
\x(n+1)=\x(n)+\epsilon h(\x(n))f_0(y(n))+\epsilon^2 f(\x(n),y(n),\epsilon),
\end{align}
where $\x(n)\in\R$ and the fast variables $y(n)$ are generated as before.

As $\epsilon\to0$, we expect that $\hat x^{(\epsilon)}(t)=\x(t\epsilon^{-2})$ converges weakly to solutions $X$ of an SDE of the form 
\[
dX=h(X)\,dW+F(X)\,dt,
\]
but there is the issue of how to interpret the stochastic integral
$\int h(X)\,dW$.
In the continuous time setting, one expects the limiting SDE to be Stratonovich~\cite{Sussmann78, WongZakai65},
and this is indeed the case, see Theorem~\ref{thm-mult}.
However, the discrete time case is very different, as shown by
Givon \& Kupferman~\cite{GivonKupferman04}.  They considered the special case where $h(x)=x$ and $f(x,y,\epsilon)=\lambda x$ ($\lambda$ constant), and showed that in general the stochastic integral is neither It\^{o} nor Stratonovich except in the special case where $y(n)$
is an iid sequence -- in that case the integral is It\^{o}.   Their proof exploited linearity in a crucial way.   Here we extend their results, relaxing linearity and allowing $f$ to depend on $y$.

\begin{thm}    \label{thm-mult_map}
Let $d=1$ and consider equation~\eqref{eq-map_mult}.
Assume that $f_0$ and $f$ are locally Lipschitz in $x$ and $y$, 
and that
$\lim_{\epsilon\to0} f(x,y,\epsilon)=f(x,y,0)$
uniformly on compact subsets of $\R\times\Lambda$.
Assume that $h:\R\to\R$ is $C^1$ and nonvanishing.
Assume that $\Lambda$ satisfies the WIP as described below.
Suppose that $\int_\Lambda f_0\,d\mu=0$ and 
set $F(x)=\int_\Lambda f(x,y,0)\,d\mu(y)$.

Consider the Stratonovich SDE
\begin{align} \label{eq-SDE_mult_map}
dX=\sigma h(X)\circ dW+\Bigl(F(X)-\frac12 h(X)h'(X)\int_\Lambda f_0^2\,d\mu\Bigr)\, dt,
\quad X(0)=\xi,
\end{align}
where $W$ is unit $1$-dimensional Brownian motion.
Suppose that solutions $X$ to the SDE exist on $[0,\infty)$
with probability one.
Then $\hat x^{(\epsilon)}\to_w X$ in $C([0,\infty),\R)$ 
as $\epsilon\to0$.
\end{thm}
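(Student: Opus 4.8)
The plan is to eliminate the multiplicative structure by a change of variables, apply the additive-noise result Theorem~\ref{thm-add_map}, and transform back using It\^o's formula. Since $h$ is $C^1$ and nonvanishing, the map $\psi(x)=\int_{\xi}^{x}h(s)^{-1}\,ds$ is a strictly monotone $C^2$ diffeomorphism onto its image, with inverse $H=\psi^{-1}$ satisfying $H'=h\circ H$. Setting $\z(n)=\psi(\x(n))$, I would Taylor-expand $\psi$ to second order along the increments of~\eqref{eq-map_mult}. Writing $\Delta_n=\x(n+1)-\x(n)=\epsilon h(\x(n))f_0(y(n))+\epsilon^2 f(\x(n),y(n),\epsilon)$, the first-order term $\psi'(\x(n))\Delta_n$ produces $\epsilon f_0(y(n))+\epsilon^2 h(\x(n))^{-1}f(\x(n),y(n),\epsilon)$, while the second-order term $\tfrac12\psi''(\x(n))\Delta_n^2$ produces, using $\psi''=-h'/h^2$, the $O(\epsilon^2)$ contribution $-\tfrac12 h'(\x(n))f_0(y(n))^2$. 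Hence $\z$ satisfies an equation of the additive form~\eqref{eq-map_add},
\begin{align*}
\z(n+1)=\z(n)+\epsilon f_0(y(n))+\epsilon^2\bar f(\z(n),y(n),\epsilon)+\rho_n,
\end{align*}
with effective drift $\bar f(z,y,0)=h(H(z))^{-1}f(H(z),y,0)-\tfrac12 h'(H(z))f_0(y)^2$ and Taylor remainder $\rho_n$.

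The first technical step is to show the accumulated remainder is negligible: $\rho_n=\tfrac12\bigl(\psi''(\xi_n)-\psi''(\x(n))\bigr)\Delta_n^2$ is $o(\epsilon^2)$ uniformly on compact sets by continuity of $\psi''$, so summing over the $O(\epsilon^{-2})$ relevant steps gives a contribution tending to zero. To make this rigorous I would localize: introduce a stopping time $\taueR$ at which $|\x|$ first exceeds $R$, work on $\{|x|\le R\}$ where all coefficients and their derivatives are bounded and uniformly continuous, establish the conclusion up to $\taueR$, and finally remove the truncation by letting $R\to\infty$ using the assumed global existence of $X$. On this compact set $\bar f$ meets the regularity required to run the proof of Theorem~\ref{thm-add_map} (locally Lipschitz in $y$, continuous in $z$, with the stated uniform $\epsilon\to0$ limit; note that continuity in $z$ is all the ergodic-averaging argument needs, since existence of the limit is assumed a priori), and the mean-zero and WIP hypotheses on $f_0$ are inherited unchanged. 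Thus $\hat z^{(\epsilon)}\to_w Z$ where
\begin{align*}
dZ=\sigma\,dW+\Bigl(h(H(Z))^{-1}F(H(Z))-\tfrac12 h'(H(Z))\int_\Lambda f_0^2\,d\mu\Bigr)\,dt,
\end{align*}
with $\sigma=\sqrt\Sigma$ the covariance from the WIP.

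Finally I would transform back. Since $X=H(Z)$ with $H$ continuous, the continuous mapping theorem gives $\hat x^{(\epsilon)}=H(\hat z^{(\epsilon)})\to_w H(Z)$, so it remains only to verify that $H(Z)$ solves~\eqref{eq-SDE_mult_map}. Applying It\^o's formula to $H(Z)$, using $H'=h\circ H$, $H''=(h'h)\circ H$ and $d\langle Z\rangle=\sigma^2\,dt$,
\begin{align*}
dX&=h(X)\,dZ+\tfrac12 h'(X)h(X)\sigma^2\,dt\\
&=\sigma h(X)\,dW+\Bigl(F(X)-\tfrac12 h(X)h'(X)\int_\Lambda f_0^2\,d\mu\Bigr)\,dt+\tfrac12\sigma^2 h(X)h'(X)\,dt,
\end{align*}
which is exactly the It\^o form of~\eqref{eq-SDE_mult_map}, since converting $\sigma h(X)\circ dW$ to It\^o adds precisely $\tfrac12\sigma^2 h(X)h'(X)\,dt$.

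The main obstacle, and the conceptual heart of the result, is the bookkeeping of the two distinct second-order corrections. The change of variables generates a drift correction proportional to the single-step variance $\int_\Lambda f_0^2\,d\mu$, whereas the backward It\^o transformation reintroduces a correction proportional to the full covariance $\sigma^2=\Sigma$, which in general differs from $\int_\Lambda f_0^2\,d\mu$ by the correlation sums in~\eqref{eq-GK}. It is precisely this mismatch that forces the extra drift term in~\eqref{eq-SDE_mult_map} and explains why the discrete-time limit is neither It\^o nor Stratonovich. Rigorously, the delicate point is controlling the sum $\epsilon^2\sum_j h'(\x(j))f_0(y(j))^2$ and showing it converges to $\int_\Lambda f_0^2\,d\mu\int_0^t h'(X)\,ds$ rather than to anything involving $\Sigma$; this requires the ergodic and large-deviation control of $f_0^2$ along orbits together with continuity of $h'$, which is where the $C^1$ assumption on $h$ and the mild chaoticity of $\Lambda$ are both used.
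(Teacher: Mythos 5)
Your proposal is correct and follows essentially the same route as the paper's proof in Section~\ref{sec-mult_map}: the change of variables $\psi$ is exactly the paper's $r$ with $r'=1/h$, the second-order Taylor expansion producing the correction $-\tfrac12 h'(\x(n))f_0(y(n))^2$ matches the paper's $\tfrac12 r''[r']^{-2}f_0^2$ term, the reduction feeds into Theorem~\ref{thm-add_map} with the same effective drift $\tilde F$, and the return trip is via the continuous mapping theorem. The only cosmetic difference is that you identify the limit by applying It\^o's formula to $H(Z)$ and converting to Stratonovich, whereas the paper invokes Proposition~\ref{prop-Ito} (the Stratonovich chain rule) in the forward direction --- the same verification; likewise your stopping-time localization is precisely the paper's reduction via the argument of Theorem~\ref{thm-reg} stated at the start of Section~\ref{sec-maps}.
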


\begin{rmk} \label{rmk-GK}
Note as in~\cite{GivonKupferman04} that the correction term in this SDE is It\^o if and only if
$\sigma^2=\int_\Lambda f_0^2\,d\mu$.   This holds in the independent 
case but is generally false.  (For example, the Green-Kubo formula~\eqref{eq-GK} specialised to the case $d=1$ yields
$\sigma^2=\int_\Lambda f_0^2\,d\mu+2\sum_{n=1}^\infty \int_\Lambda f_0(y(n))f_0(y(0))\,d\mu$).

See~\cite[Section~6]{GivonKupferman04} and also~\cite{KupfermanPavliotisStuart04,   PavliotisStuart05} for discussions concerning corrections that are neither It\^o not Stratonovich.
\end{rmk}

The assumption that $h$ is nonvanishing means that we can write $h=1/r'$ where 
$r$ is a monotone differentiable function.   By a change of variables, $Z=r(X)$, it is then possible to reduce to the situation where there is no multiplicative noise, see Sections~\ref{sec-mult_flows} and~\ref{sec-mult_map}.
In the process the drift term $F(X)$ in the SDE is transformed into $\tilde F(Z)$ where 
\[
\tilde F=(F/h)\circ r^{-1},
\]
(see Proposition~\ref{prop-Ito}).

These considerations lead to an extension of Theorem~\ref{thm-mult_map} where
$h$ is not required to be nonvanishing.
Instead we require only that there is a monotone differentiable function $r$
such that $r'=1/h$ and such that $\xi=X(0)$ lies in the domain of
$r$. (It suffices that $h(\xi)\neq0$,
in which case we can choose $r(x)=\int_\xi^x 1/h(y)\,dy$ for $x$ near $\xi$.)
Define $\tilde F=(F/h)\circ r^{-1}$
whenever this expression makes sense.

\begin{prop}  \label{prop-mult_map}
Assume the same set up as in Theorem~\ref{thm-mult_map} except that $h$ is not assumed to be nonvanishing.  Consider the SDE
\begin{align} \label{eq-SDE_mult2_map}
dZ=\sigma \,dW+\tilde F(Z)\,dt, \quad Z(0)=r(\xi).
\end{align}
Suppose that solutions $Z$ to the SDE
 exist on $[0,\infty)$ with probability one.
Then solutions $X$ to the SDE~\eqref{eq-SDE_mult_map}
 exist on $[0,\infty)$ with probability one,
and $\hat x^{(\epsilon)}\to_w X$ in $C([0,\infty),\R)$ 
as $\epsilon\to0$.
\end{prop}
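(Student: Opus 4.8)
The plan is to reduce Proposition~\ref{prop-mult_map} to the multiplicative-noise result already in hand, Theorem~\ref{thm-mult_map}, by exploiting that $h$ is nonvanishing on the domain of $r$, and then to remove the resulting localization using the hypothesised global existence of $Z$. By assumption $r$ is monotone and differentiable with $r'=1/h$ on an open interval $I\ni\xi$ on which $h$ does not vanish, so $r$ is a homeomorphism of $I$ onto the interval $r(I)$ and $r^{-1}\colon r(I)\to I$ is continuous and monotone. Since $h$ is continuous and nonvanishing on $I$, it is bounded away from zero on each compact subinterval $I_R\Subset I$; hence $h|_{I_R}$ extends to a globally defined $C^1$ nonvanishing function $h_R\colon\R\to\R$ with $h_R=h$ on $I_R$, which we are free to prescribe off $I_R$ so that the associated SDE satisfies the global-existence hypothesis of Theorem~\ref{thm-mult_map}. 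Only the restriction $h_R=h$ on $I_R$ will actually be used.

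First I would apply Theorem~\ref{thm-mult_map} to the system in which $h$ is replaced by $h_R$. Since $h_R$ meets all the hypotheses of that theorem, it yields $\hat x^{(\epsilon)}_R\to_w X_R$ in $C([0,\infty),\R)$ as $\epsilon\to0$, where $\hat x^{(\epsilon)}_R$ is generated by~\eqref{eq-map_mult} with $h_R$ in place of $h$ and $X_R$ solves the Stratonovich SDE~\eqref{eq-SDE_mult_map} with $h_R$ in place of $h$. Internally this is the change of variables $Z=r_R(X)$, $r_R'=1/h_R$ (agreeing with $r$ on $I_R$ up to a constant), which reduces the problem to the additive result Theorem~\ref{thm-add_map} applied to $r_R(\x(n))$: a Taylor expansion of $r_R$ along the map, using $r_R'=1/h_R$ and $r_R''=-h_R'/h_R^2$, turns the multiplicative increment $\epsilon h_R(\x(n))f_0$ into the additive increment $\epsilon f_0$ and produces exactly the drift $\tilde F=(F/h)\circ r^{-1}$ of~\eqref{eq-SDE_mult2_map}.

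Next I would compare the $h$-system with the $h_R$-system through stopping times. Let $\taueR$ be the first $n$ at which $\x(n)$ leaves $I_R$, and let $\sigma_R$ be the first exit time of $X$ from $I_R$. Because $h_R=h$ on $I_R$, the original and modified discrete trajectories coincide up to $\taueR$, and the two limits coincide, $X=X_R$, up to $\sigma_R$. The structural point is that if $h$ vanishes at an endpoint of $I$, then $r'=1/h$ blows up there, so $r(I)$ is unbounded on that side and $r(I_R)\uparrow r(I)$ as $R\to\infty$; the hypothesised global existence of $Z$ in $r(I)$ therefore forces $\sigma_R\to\infty$ almost surely and guarantees $Z_t\in r(I)$ for all $t$. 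Consequently $X=r^{-1}(Z)\in I$ is defined for all $t$ and, by the It\^o-formula computation underlying Theorem~\ref{thm-mult_map}, solves~\eqref{eq-SDE_mult_map}, which establishes the asserted global existence of $X$; moreover $X_R=X$ up to $\sigma_R$ with $\sigma_R\to\infty$ gives $X_R\to X$ almost surely, hence $X_R\to_w X$ as $R\to\infty$.

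Finally I would assemble these pieces by a converging-together argument in the sense of~\cite{Billingsley1999}. For each $R$ we have $\hat x^{(\epsilon)}_R\to_w X_R$ as $\epsilon\to0$ and $X_R\to_w X$ as $R\to\infty$. On $C([0,T],\R)$ the sup-distance between $\hat x^{(\epsilon)}$ and $\hat x^{(\epsilon)}_R$ is nonzero only on the event $\{\taueR\le T\epsilon^{-2}\}$, so it suffices to show that, once $R$ is large, $\P(\taueR\le T\epsilon^{-2})<\delta$ uniformly for small $\epsilon$; the converging-together lemma then yields $\hat x^{(\epsilon)}\to_w X$ on each $C([0,T],\R)$ and hence on $C([0,\infty),\R)$. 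I expect the main obstacle to be precisely this last step: obtaining the uniform-in-$\epsilon$ control of the discrete escape probability $\P(\taueR\le T\epsilon^{-2})$ near the zeros of $h$ and matching it to the continuous exit time $\sigma_R$, so that the global existence of $Z$ can legitimately be transferred to the discrete dynamics. This is the one place where the vanishing of $h$, and the resulting unboundedness of $r(I)$, genuinely enters, rather than being a routine repetition of the proof of Theorem~\ref{thm-mult_map}.
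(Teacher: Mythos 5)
Your overall strategy (truncate $h$ to a globally nonvanishing $h_R$, apply Theorem~\ref{thm-mult_map}, then remove the truncation by stopping times) is workable in outline, but as written it has two genuine defects. First, the structural claim on which you transfer global existence is false: from $h$ vanishing at an endpoint of $I$ you infer that $r'=1/h$ blows up there and hence that ``$r(I)$ is unbounded on that side''. Divergence of $1/h$ at a point does not imply divergence of $\int 1/h$; in the paper's own example in Section~\ref{sec-numerics} one has $h(x)=x^{1/2}$, $\xi=1$, $r(x)=2(\sqrt{x}-1)$, so $r(I)=(-2,\infty)$ is bounded below, and whether $Z$ remains in $r(I)$ is decided by the Feller condition for the CIR process, not by any automatic unboundedness. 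Since $\tilde F=(F/h)\circ r^{-1}$ is only defined on $r(I)$, the hypothesis that $Z$ exists on $[0,\infty)$ must itself be read as saying that $Z(t)$ stays in $r(I)$; it is not a consequence of your argument. (The conclusion you wanted, $\sigma_R\to\infty$ almost surely, is still true, but for the elementary reason that $X=r^{-1}(Z)$ is then a continuous $I$-valued process, so $X([0,T])$ is a compact subset of the open interval $I$ and is eventually contained in $I_R$.) Second, and decisively, you do not prove the step you yourself identify as the main obstacle: the uniform-in-$\epsilon$ smallness of $\P(\tau_{\epsilon,R}\le T\epsilon^{-2})$ for large $R$, which your converging-together argument requires. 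Without it the proposal is a program, not a proof. It is fillable by imitating Proposition~\ref{prop-R} --- since $h_R$ is nonvanishing the exit time from $I_R$ is almost surely a continuous functional of the path, so $\tau_{\epsilon,R}$ converges in distribution to the exit time of $X_R$ by the continuous mapping theorem, and the latter agrees with $\sigma_R$, which tends to infinity in probability --- but this has to be carried out, and you did not.

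The paper makes the entire detour unnecessary: Subsection~\ref{sec-mult_map} proves Theorem~\ref{thm-mult_map} and Proposition~\ref{prop-mult_map} by one and the same argument, in which nonvanishing of $h$ plays no role and only the existence of $r$ near $\xi$ is used. One sets $\z(n)=r(\x(n))$ directly at the level of the map, Taylor-expands the $C^2$ function $r$ along~\eqref{eq-map_mult}, and obtains an additive-noise recursion $\z(n+1)=\z(n)+\epsilon f_0(y(n))+\epsilon^2\tilde f(\z(n),y(n),\epsilon)$ whose averaged drift is exactly $\tilde F$. Theorem~\ref{thm-add_map}, whose existence hypothesis is precisely the hypothesis on $Z$ in the proposition, gives $\z\to_w Z$, and applying the continuous map $r^{-1}$ gives $\hat x^{(\epsilon)}\to_w X=r^{-1}(Z)$, with global existence of $X$ immediate from the change of variables (Proposition~\ref{prop-Ito}). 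All localization is performed in the $z$-coordinate, via the truncation invoked at the start of Section~\ref{sec-maps}, where the zeros of $h$ are invisible; your extension $h_R$, the $x$-space stopping times, and the discrete escape estimate --- the very step you left open --- are thereby never needed.
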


For example, suppose that $h(x)=x$ and $f(x,y)=xq(x,y)$, $F(x)=xQ(x)$.
If $\xi>0$, then we choose $r(x)=\log x$ yielding
$\tilde F(z)=Q(e^z)$ which is well-behaved in many situations
(eg.\ $Q(x)$ constant, $Q(x)=-x^p$, $p\ge0$).  The case $\xi<0$ is similar with
$r(x)=\log(-x)$.

\begin{rmk}[Higher-dimensional multiplicative noise]

A more general setting that includes both situations described above is where
the slow equations have the form
\[
\x(n+1)=\x(n)+\epsilon h(\x(n))f_0(y(n))+\epsilon^2 f(\x(n),y(n),\epsilon), \quad \x(0)=\xi,
\]
where $\x(n)\in\R^d$, $f_0:\Lambda\to\R^d$, $f:\R^d\times\Lambda\times\R\to\R^d$ and $h:\R^d\to L(\R^d,\R^d)$.
Theorem~\ref{thm-add_map} deals with the case $h(x)=I_d$, $d$ general, and Theorem~\ref{thm-mult_map}
covers the case $d=1$, $h$ general.
It is well known that multiplicative noise presents more serious problems in higher dimensions, and it will be necessary to make assumptions beyond the WIP in general.  This is the subject of future work.  However, the methods in this paper generalise immediately to the higher-dimensional situation whenever $h$ has the particular form $h=(Dr)^{-1}$ for some $C^2$
diffeomorphism $r:\R^d\to\R^d$.   The formulas in the discrete case are 
straightforward to derive but unpleasant to write down and we omit them here.
The analogous result in the flow case is described in
Remark~\ref{rmk-higher}.
\end{rmk}

Finally we consider the case where the fast dynamics is not sufficiently 
chaotic to support the central limit theorem.   In this case, we can still hope to prove homogenization theorems but the limiting SDE is driven 
by a stable L\'evy process.   Again, we obtain results for ODES and maps, with additive noise in general dimensions and multiplicative noise for $d=1$.
The interpretation of the stochastic integral in the 
limiting SDE is the same for both continuous and discrete time, and is of Marcus type (see Section~\ref{sec-Levy} for further details.)

The remainder of this paper is organised as follows.
In Section~\ref{sec-fast}, we recall the setting of~\cite{MS11} as regards the chaoticity assumptions on the fast dynamics, though now in the context of discrete time.   In Section~\ref{sec-flows}, we consider fast-slow ODEs, relaxing the uniformity of the Lipschitz conditions in~\cite{MS11} and permitting multiplicative noise for $d=1$ (with a restricted extension to higher dimensions).
In Section~\ref{sec-maps}, we consider fast-slow maps and prove the results stated in this introduction.
In Section~\ref{sec-Levy}, we state and prove results where the limiting SDE is driven by a stable L\'evy process.
In Section~\ref{sec-numerics}, we present some numerical results.
Section~\ref{sec-conc} is a conclusion section.

\section{Assumptions on the fast dynamics}
\label{sec-fast}

In~\cite{MS11}, we made mild assumptions on the fast dynamics that are satisfied by large classes of dynamical systems.   The formulation there is for continuous time.  Here we discuss discrete time (both contexts are required in this paper).

Let $g:\Lambda\to\Lambda$ where $\Lambda$ is a compact subset of $\R^\ell$
and $\mu$ is an ergodic invariant measure supported on $\Lambda$.
Given $y_0=\eta\in\Lambda$, we define the fast variables $y(n)$, $n\ge0$,
by setting $y(n+1)=g(y(n))$.

Let $f_0:\Lambda\to\R^d$ be a Lipschitz observable of
mean zero.  Define $W_n(t)=n^{-\frac12}\sum_{j=0}^{nt-1}f_0(y(j))$ for $t=0,\frac1n,\frac2n,\ldots$ and linearly interpolate to obtain a continuous function
$W_n:[0,\infty)\to\R^d$.  We assume the {\em weak invariance principle (WIP)},
namely that $W_n\to_w \sqrt\Sigma W$ in $C([0,\infty),\R^d)$ where $W$ is unit $d$-dimensional
Brownian motion and $\Sigma$ is a $d\times d$ covariance matrix.


\begin{rmk}  \label{rmk-Young}
As discussed in~\cite[Remark 1.3(a)]{MS11}, the WIP holds for a large class of maps and flows.  These include, but go far beyond, Axiom~A diffeomorphisms and flows, H\'enon-like attractors and Lorenz attractors.
Young~\cite{Young98,Young99} introduced a class of
nonuniformly hyperbolic maps with exponential and polynomial decay of correlations.  For maps, the WIP holds when the correlations are summable.   For flows, it suffices that there is a Poincar\'e map with these properties and then the WIP lifts to these flows (irrespective of the mixing properties of the flow).  Precise statements about the validity of the WIP can be found in~\cite{MN05,MN09}.
\end{rmk}

%
\section{Extensions of the results for flows}
\label{sec-flows}

In this section, we extend the results in~\cite{MS11} by (i) relaxing the global Lipschitz conditions and (ii) allowing multiplicative noise.

\subsection{Relaxing the global Lipschitz condition on $f$}
\label{sec-reg}

Consider the fast-slow system~\eqref{eq-MS}.
We suppose throughout that the fast equations possess a ``mildly chaotic'' 
compact attractor $\Lambda\subset\R^\ell$ satisfying the WIP.
It is natural to assume that $f_0:\Lambda\to\R^d$, $f:\R^d\times\Lambda\to\R^d$ and $g:\Lambda\to\Lambda$ are locally Lipschitz to ensure
existence and uniqueness of solutions to the various initial value problems arising above.
Boundedness and uniformity of Lipschitz constants on $\Lambda$ then follows from
compactness.
However, in~\cite{MS11} it is further assumed (mainly for simplicity) that $f$
is bounded with a uniform Lipschitz constant on the whole of $\R^d\times\Lambda$.

In this subsection, we show that the result of~\cite{MS11} holds without the
global Lipschitz condition provided solutions to the limiting SDE exist
for all time with probability one.   
The formulation of~\cite[Theorem 1.1]{MS11} is unchanged if in addition
solutions to the fast-slow system exist for all time for $\mu$-almost every $\eta$.
 Otherwise, we require the following modification.  Throughout we regard $\xi$ as fixed.

Let $[0,\taue]$ be the maximal interval of existence for a solution $\x$ and define 
\[
\x_*(t)=\begin{cases} \x(t), & 0\le t\le \taue/2 \\
\x(\taue/2), & t\ge \taue/2\end{cases}.
\]
(If $\x$ exists on $[0,\infty)$, then set $\x_*\equiv\x$.)
We say that $\x$ converges weakly to $X$ in $C([0,\infty),\R^d)$
if $\taue\to\infty$ in probability and 
$\x_*$ converges weakly to $X$ in $C([0,\infty),\R^d)$.

\begin{thm} \label{thm-reg}
Assume that the fast equation (with $\epsilon=1$) has a mildly chaotic compact
invariant set $\Lambda$ with invariant ergodic probability measure $\mu$.
Suppose that
$f_0:\Lambda\to\R^d$ and $f:\R^d\times\Lambda\to\R^d$ are locally Lipschitz, 
and that $\int_\Lambda f_0\,d\mu=0$.
Define $F(x)=\int_\Lambda f(x,y)\,d\mu(y)$ and let $\xi\in\R^d$.   

Let $\x,\y$ denote the solutions to the fast-slow system~\eqref{eq-MS}.
Assume that solutions $X$ to the SDE~\eqref{eq-SDE} exist on $[0,\infty)$
with probability one.
Then $\x\to_w X$ in $C([0,\infty),\R^d)$ as $\epsilon\to0$.
\end{thm}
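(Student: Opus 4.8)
The plan is to reduce the locally Lipschitz problem to the globally Lipschitz case of~\cite{MS11} by a truncation argument, using the assumed global existence of $X$ to rule out any escape of mass. First I would fix a horizon $T>0$ and, for each $R>0$, replace $f$ by $f^R(x,y)=f(\rho_R(x),y)$, where $\rho_R:\R^d\to\R^d$ is a smooth map equal to the identity on $\{|x|\le R\}$ with image contained in the compact ball $\{|x|\le 2R\}$. Since $f$ is locally Lipschitz, it is bounded with a uniform Lipschitz constant on the compact set $\{|x|\le 2R\}\times\Lambda$, so $f^R$ is bounded and globally Lipschitz on all of $\R^d\times\Lambda$, and it agrees with $f$ on $\{|x|\le R\}\times\Lambda$. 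Writing $\xR,\y$ for the solution of the modified fast-slow system, the globally Lipschitz right-hand side (together with the boundedness of $f_0$ on the compact set $\Lambda$) guarantees that $\xR$ exists on $[0,\infty)$. Setting $F^R(x)=\int_\Lambda f^R(x,y)\,d\mu(y)$, the original theorem of~\cite{MS11} applies verbatim to $f^R$ (the LDP hypothesis applies since $f^R$ is globally Lipschitz) and yields $\xR\to_w X^R$ in $C([0,\infty),\R^d)$, where $X^R$ solves~\eqref{eq-SDE} with $F$ replaced by $F^R$; the SDE for $X^R$ has global solutions because $F^R$ is globally Lipschitz.

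The key point is that the truncated objects coincide with the untruncated ones until they leave the ball of radius $R$. By pathwise uniqueness of SDE solutions, $X$ and $X^R$ agree on $[0,\infty)$ on the event $\{\sup_{[0,2T]}|X|<R\}$; and since $X$ exists globally we have $\sup_{[0,2T]}|X|<\infty$ almost surely, so for any $\delta>0$ I may fix $R$ so large that $\P(\sup_{[0,2T]}|X|\ge R-1)<\delta$, which forces $\P(\sup_{[0,2T]}|X^R|\ge R-1)<\delta$ as well. Let $\taueR$ denote the first exit time of $\xR$ from $\{|x|<R\}$, so that $\x=\xR$ on $[0,\taueR)$. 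Applying the continuous mapping theorem to the supremum functional on $C([0,2T],\R^d)$, together with $\xR\to_w X^R$ and the portmanteau inequality for the closed set $[R-1,\infty)$, I obtain
\[
\limsup_{\epsilon\to0}\P\bigl(\textstyle\sup_{[0,2T]}|\xR|\ge R-1\bigr)\le\P\bigl(\sup_{[0,2T]}|X^R|\ge R-1\bigr)\le\delta.
\]

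On the event $\{\sup_{[0,2T]}|\xR|<R\}$ we have $\taueR>2T$, hence $\x=\xR$ on $[0,2T]$, and in particular $\taue>2T$; since $\taue/2>T$ there, we conclude $\x_*=\x=\xR$ on $[0,T]$. This gives two things. First, $\P(\taue\le 2T)\le\P(\sup_{[0,2T]}|\xR|\ge R-1)$, whose $\limsup$ in $\epsilon$ is at most $\delta$ for every $\delta>0$, so $\taue\to\infty$ in probability. Second, for any bounded uniformly continuous $\phi$ on $C([0,T],\R^d)$ I would estimate $|\E\phi(\x_*)-\E\phi(X)|$ by inserting $\phi(\xR)$ and $\phi(X^R)$: the middle difference $|\E\phi(\xR)-\E\phi(X^R)|$ tends to $0$ by~\cite{MS11}, while the remaining two differences are bounded by $2\|\phi\|_\infty$ times the probabilities that $\x_*\ne\xR$, respectively $X\ne X^R$, somewhere on $[0,T]$, each of which is at most $\delta$ in the $\epsilon\to0$ limit. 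Letting $\delta\to0$ yields $\x_*\to_w X$ in $C([0,T],\R^d)$ for every $T$, and hence in $C([0,\infty),\R^d)$.

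The main obstacle is the apparent circularity: $\x$ itself may blow up in finite time, so no a~priori estimate can be applied to it directly. The argument must therefore run entirely through the globally defined truncation $\xR$, and the only mechanism available to bound the probability that $\xR$ leaves the ball \emph{uniformly in} $\epsilon$ is to transfer the tightness of $\sup_{[0,2T]}|X^R|$ (equivalently $\sup_{[0,2T]}|X|$) back through the weak convergence $\xR\to_w X^R$. This is precisely where the global existence hypothesis on $X$ and the portmanteau theorem do the essential work; everything else is the routine bookkeeping of comparing the truncated and untruncated systems up to their exit times.
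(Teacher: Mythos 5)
Your proposal is correct, and its skeleton is the same as the paper's: truncate $f$ to a globally Lipschitz $f_R$, invoke \cite{MS11} to get $\xR\to_w X^R$, use global existence of $X$ to make exit from a large ball before time $2T$ improbable, transfer this to $\xR$ uniformly in small $\epsilon$, and conclude both that $\taue\to\infty$ in probability and that $\x_*\to_w X$ on $[0,T]$ for every $T$. Where you genuinely diverge is at the transfer step, which is the crux. The paper's Proposition~\ref{prop-R} pushes the exit-time bound through the functional $\psi(v)=\sup\{t\ge0:v(t)\in B_R(v(0))\}$, asserting that $\psi$ is continuous and applying the continuous mapping theorem to obtain $\taueR\to_d\tau_R$. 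You instead compose with the everywhere-continuous functional $u\mapsto\sup_{t\in[0,2T]}|u(t)|$ and apply the portmanteau inequality for the closed set $[R-1,\infty)$, exploiting the buffer between $R-1$ and $R$. This is more robust: sojourn/exit-time functionals such as $\psi$ are \emph{not} continuous on all of $C([0,\infty),\R^d)$ (paths touching $\partial B_R(v(0))$ tangentially are discontinuity points), so the paper's step implicitly needs almost-sure continuity of $\psi$ at the law of $X^R$, whereas your sup-plus-portmanteau version sidesteps the issue at essentially no cost. Your final triangle-inequality bookkeeping with bounded uniformly continuous test functions is just an explicit version of the paper's ``neglecting sets of measure $\delta$'' argument.

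Two harmless imprecisions to fix: pathwise uniqueness gives that $X$ and $X^R$ agree on $[0,2T]$ (not on $[0,\infty)$) on the event $\{\sup_{[0,2T]}|X|<R-1\}$, which is all you use; and the probabilities attached to $\x$, $\xR$, $\taue$, $\taueR$ should be taken with respect to $\mu$ on the initial condition $\eta$, as in the paper.
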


In the remainder of this subsection, we prove Theorem~\ref{thm-reg} by reducing it to the situation in~\cite{MS11}.
The ideas are standard, but care has to be taken since we are talking about
weak convergence of solutions, and solutions to the fast-slow equation can
blow up arbitrarily quickly.

Let $R>0$ and define $f_R:\R^d\times\Lambda\to\R^d$ to be a  globally Lipschitz function that agrees with $f$ on $B_R(\xi)=\{x\in\R^d:|x-\xi|\le R\}$
(where $|x-\xi|$ is Euclidean distance).
Let $\xR$, $\y$ denote solutions to~\eqref{eq-MS} with $f$ replaced by $f_R$.
These solutions exist for all time; in particular
$\xR\in C([0,\infty),\R^d)$.   
Similarly, let $X^R$ denote the solution to the SDE~\eqref{eq-SDE}
with $F$ replaced by $F_R$ where
$F_R(x)=\int_\Lambda f_R(x,y)\,d\mu(y)$.
(Note that $\Sigma$ depends only on $f_0$ and $g$ and hence is independent 
of $R$.)
By~\cite{MS11},
$\xR\to_w X^R$ in $C([0,\infty),\R^d)$ as $\epsilon\to0$.

Next, we define stopping times $\tau_R,\taueR\in(0,\infty)\cup\{\infty\}$
where $\tau_R\ge0$ is least such that $X^R\in \partial B_R(\xi)=\{x\in\R^d:|x-\xi|=R\}$ and
$\taueR\ge0$ is least such that $\xR\in \partial B_R(\xi)$.
By construction, $X^R\equiv X$ on $[0,\tau_R]$ and
$\xR\equiv \x$ on $[0,\taueR]$.

\begin{prop}  \label{prop-R}
$\taue\to\infty$, $\tau_R\to\infty$ and $\taueR\to\infty$ in probability as $R\to\infty$, $\epsilon\to0$. 
\end{prop}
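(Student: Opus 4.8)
The plan is to treat the three stopping times in turn, reducing everything to the control of $\taueR$ and $\tau_R$ by means of the pathwise inequality $\taue\ge\taueR$. Indeed, on $[0,\taueR]$ the path $\xR$ stays in $B_R(\xi)$, where $f_R=f$, so by uniqueness of solutions $\x\equiv\xR$ there; in particular $\x$ exists at least up to time $\taueR$, whence $\taue\ge\taueR$ for every $R$. Likewise, since $F_R=F$ on $B_R(\xi)$, uniqueness for the SDE~\eqref{eq-SDE} gives $X^R\equiv X$ until the first exit from $B_R(\xi)$, so $\tau_R$ coincides with the first time the globally-defined solution $X$ reaches $\partial B_R(\xi)$. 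For a continuous path started at $\xi$ one has, for each fixed $T>0$, the identities $\{\tau_R>T\}=\{\sup_{0\le t\le T}|X(t)-\xi|<R\}$, $\{\taueR>T\}=\{\sup_{0\le t\le T}|\xR(t)-\xi|<R\}$, and $\{\sup_{0\le t\le T}|X^R(t)-\xi|<R\}=\{\tau_R>T\}$. These let me replace the (discontinuous) hitting-time functionals by the (continuous) supremum functional.

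First I would dispose of $\tau_R$. By hypothesis $X$ exists on $[0,\infty)$ with probability one, so for each fixed $T$ the quantity $M_T=\sup_{0\le t\le T}|X(t)-\xi|$ is finite almost surely. By the identity above, $\P(\tau_R>T)=\P(M_T<R)\to1$ as $R\to\infty$ by monotone convergence, giving $\tau_R\to\infty$ in probability (indeed almost surely and monotonically). Next, for fixed $R$ I would control $\taueR$ as $\epsilon\to0$. By~\cite{MS11}, $\xR\to_w X^R$ in $C([0,\infty),\R^d)$, and since $w\mapsto\sup_{0\le t\le T}|w(t)-\xi|$ is continuous for the topology of uniform convergence on compacts, the continuous mapping theorem yields $\sup_{0\le t\le T}|\xR-\xi|\to_w\sup_{0\le t\le T}|X^R-\xi|$. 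Applying the Portmanteau theorem to the open set $(-\infty,R)$ and using the identities above, I obtain $\liminf_{\epsilon\to0}\P(\taueR>T)\ge\P(\tau_R>T)$.

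It remains to assemble these. For each $R$ and each fixed $\epsilon$ the pathwise bound $\taue\ge\taueR$ gives $\P(\taue>T)\ge\P(\taueR>T)$, so $\liminf_{\epsilon\to0}\P(\taue>T)\ge\liminf_{\epsilon\to0}\P(\taueR>T)\ge\P(\tau_R>T)$ for every $R$. Letting $R\to\infty$ and invoking the first step, the right-hand side tends to $1$, so $\liminf_{\epsilon\to0}\P(\taue>T)=1$, i.e.\ $\taue\to\infty$ in probability as $\epsilon\to0$; the same displayed bound gives $\taueR\to\infty$ in the joint regime $R\to\infty$, $\epsilon\to0$, and $\tau_R\to\infty$ was already established.

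The uniqueness arguments and the hitting-time/supremum identities are routine. The delicate point, and the main obstacle, is the $\epsilon\to0$ step for $\taueR$: because solutions of the fast--slow system can in principle blow up in arbitrarily short time, the hitting time is not a continuous functional of the path, so the argument must be routed through the continuous supremum functional, using the open-set (liminf) direction of Portmanteau with the strict inequality $<R$ so that the inequalities point the right way. A minor technical care is needed at the boundary value $R$, which is handled either by the open-set form above or by choosing $R$ to be a continuity point of the law of $\sup_{0\le t\le T}|X^R-\xi|$.
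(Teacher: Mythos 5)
Your proof is correct, and its overall skeleton matches the paper's: truncate $f$ to $f_R$, invoke $\xR\to_w X^R$ from~\cite{MS11}, dispose of $\tau_R$ via global existence of $X$, and pass to $\taue$ through the pathwise bound $\taue\ge\taueR$. Where you genuinely differ is at the crucial $\epsilon\to0$ step for fixed $R$. The paper defines $\psi(v)=\sup\{t\ge0:v(t)\in B_R(v(0))\}$, asserts that $\psi$ is a continuous map, and concludes $\taueR=\psi(\xR)\to_d\psi(X^R)=\tau_R$ by the continuous mapping theorem, then extracts the probability bound from this convergence in distribution. You instead push the continuous mapping theorem only through the genuinely continuous functional $w\mapsto\sup_{0\le t\le T}|w(t)-\xi|$ and then use the open-set half of the Portmanteau theorem with the strict event $\{\sup_{0\le t\le T}|\xR(t)-\xi|<R\}=\{\taueR>T\}$ to obtain the one-sided bound $\liminf_{\epsilon\to0}\P(\taueR>T)\ge\P(\tau_R>T)$, which is all the assembly step needs. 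Your route is in fact the more careful one: exit-time-type functionals such as $\psi$ are not continuous on $C([0,\infty),\R^d)$ --- continuity fails at paths that graze $\partial B_R(\xi)$ tangentially without crossing, and the paper's $\psi$ (the \emph{last} time in the ball) need not even coincide with the \emph{first} hitting time $\taueR$ when paths re-enter --- so the paper's argument implicitly needs the law of $X^R$ to charge no such grazing paths, i.e.\ $R$ to be a continuity point of the law of the running supremum. Your one-sided Portmanteau inequality sidesteps that regularity requirement entirely, at the harmless cost of establishing less than full convergence in distribution of $\taueR$; you also make explicit the uniqueness arguments behind $\taue\ge\taueR$ and $X^R\equiv X$ on $[0,\tau_R]$, which the paper records only as ``by construction.''
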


\begin{proof}
By assumption $\tau_R\to\infty$ almost surely, and hence in probability
as $R\to\infty$.   In other words,
for any $T>0$, $\delta>0$, there exists $R$ such that
$P(\tau_R>T)>1-\delta$.

Next, for fixed $R>0$,  given $v\in C([0,\infty],\R)$ we set
$\psi(v)=\sup\{t\ge0:v(t)\in B_R(v(0))\}\in(0,\infty)\cup\{\infty\}$.
This defines a continuous map $\psi$.   
Moreover,  $\taueR=\psi(\xR)$ and $\tau_R=\psi(X^R)$, so it follows from 
the continuous mapping theorem that $\taueR\to_d \tau_R$ as $\epsilon\to0$.

In particular, with $T$ and $R$ as in the first paragraph, we have
that there exists $\epsilon_0>0$ such that
$\mu(\taueR>T)>P(\tau_R>T)-\delta>1-2\delta$ for all $\epsilon\in(0,\epsilon_0)$.
Altogether, we have shown that for any $T>0$, $\delta>0$, there exists
$R$ and $\epsilon_0$ such that 
$\mu(\taueR>T)>1-2\delta$ for all $\epsilon\in(0,\epsilon_0)$, so $\taueR\to\infty$ in probability.

Since $\taue>\taueR$ for all $R$, it follows immediately that
$\taue\to\infty$ in probability.
\end{proof}

\noindent

Now fix $T>0$.   By Proposition~\ref{prop-R}, for any $\delta>0$
there exists $\epsilon_0$ and $R$ such that
$\mu(\taueR>2T)>1-\delta$ for all $\epsilon\in(0,\epsilon_0)$.
We choose $R$ so that in addition $\mu(\tau_R>2T)>1-\delta$.

For this choice of $R$, let $X_T=X^R$ and $\x_T=x^{(\epsilon),R}$.
Then we have defined families of random elements
$X_T$ and $\x_T$ in $C([0,\infty),\R^d)$ such that $\x_T\to_w X_T$.
Moreover, neglecting a set of measure $\delta$, we have
$\taue\ge\taueR>2T$ so $\taue/2>T$ and hence  
 $\x_T\equiv \x\equiv \x_*$ on $[0,T]$.  
Finally, neglecting a set of measure $\delta$, we have $X_T\equiv X$.
Hence $\x_*$ converges weakly to $X$ in $C([0,T),\R^d)$.
Since $T$ is arbitrary,
$\x_*\to_w X$ in $C([0,\infty),\R^d)$.

\subsection{Flows with multiplicative noise}
\label{sec-mult_flows}

Next, we consider the case of multiplicative noise when \mbox{$d=1$}.
Consider the fast-slow system
\begin{align} \label{eq-h}
\nonumber \dotx & = \epsilon^{-1}h(\x)f_0(\y)+f(\x,\y), \quad \x(0)=\xi \\
\doty & = \epsilon^{-2}g(\y), \quad \y(0)=\eta,
\end{align}
with $\x\in\R$, $\y\in\R^\ell$.

\begin{thm}  \label{thm-mult}
(a) Assume that $g$, $f_0$, $f$, $F$, $W$ and $\Sigma$ are as in Theorem~\ref{thm-reg}
(but with the restriction that $d=1$ and $\Sigma$ is denoted by $\sigma^2>0$).
Suppose that $h:\R\to\R$ is $C^1$ and nonvanishing.

Let $\xi\in\R$ and consider the Stratonovich SDE
\begin{align} \label{eq-SDE_mult}
dX=\sigma h(X)\circ dW+F(X)\,dt, \quad X(0)=\xi.
\end{align}

Let $\x,\y$ denote the solutions to the fast-slow system~\eqref{eq-h}.
Assume that solutions $X$ to this SDE exist on $[0,\infty)$
with probability one.
Then $\x\to_w X$ in $C([0,\infty),\R)$ as $\epsilon\to0$.\\[-1.75ex]

\noindent(b)
More generally, assume the above set up but without the assumption that $h$
is nonvanishing.  Suppose that we can write $h=1/r'$ on an interval containing $\xi$.   Write $\tilde F=(F/h)\circ r^{-1}$ where defined.
Suppose that solutions $Z$ to the SDE~\eqref{eq-SDE_mult2_map} exist on $[0,\infty)$
with probability one.

Then solutions $X$ to the SDE~\eqref{eq-SDE_mult} exist on $[0,\infty)$ with probability one, and $\x\to_w X$ in $C([0,\infty),\R)$ as $\epsilon\to0$.
\end{thm}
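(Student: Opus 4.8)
The plan is to eliminate the multiplicative noise by a deterministic change of variables and then quote the additive-noise result, Theorem~\ref{thm-reg}. The mechanism is signposted in the introduction: since $h$ is $C^1$ and nonvanishing, fix a $C^2$ function $r$ with $r'=1/h$; then $r$ is strictly monotone and maps $\R$ diffeomorphically onto an open interval $J=r(\R)$. Setting $\z=r(\x)$, the ordinary chain rule gives, for as long as $\x$ exists,
\begin{align*}
\dotz=r'(\x)\,\dotx=\frac{1}{h(\x)}\bigl(\epsilon^{-1}h(\x)f_0(\y)+f(\x,\y)\bigr)=\epsilon^{-1}f_0(\y)+\tilde f(\z,\y),
\end{align*}
where $\tilde f(z,y)=f(r^{-1}(z),y)/h(r^{-1}(z))$. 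Thus $(\z,\y)$ solves an additive fast-slow system of the form~\eqref{eq-MS} with the same fast dynamics and the same $f_0$, but with slow coupling $\tilde f$. Since $f$ is locally Lipschitz and $r^{-1},h$ are $C^1$ with $h\neq0$, the function $\tilde f$ is locally Lipschitz; moreover $\int_\Lambda f_0\,d\mu=0$ is unchanged and $\tilde F(z):=\int_\Lambda\tilde f(z,y)\,d\mu(y)=(F/h)\circ r^{-1}(z)$, the drift of~\eqref{eq-SDE_mult2_map}.

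Next I would run the three steps of the reduction. First, applying the Stratonovich change-of-variables formula to~\eqref{eq-SDE_mult} shows that $Z:=r(X)$ solves $dZ=\sigma\,dW+\tilde F(Z)\,dt$, i.e.~\eqref{eq-SDE_mult2_map} with $Z(0)=r(\xi)$; equivalently, global existence of $X$ (values in $\R$) is the same as global existence of $Z$ (values in $J$). Second, Theorem~\ref{thm-reg} applied to the additive system for $\z$ yields $\z\to_w Z$ in $C([0,\infty),\R)$, in the sense that includes the blow-up convention handled by the stopping-time bookkeeping of Proposition~\ref{prop-R}. Third, since $\x=r^{-1}(\z)$, $X=r^{-1}(Z)$ and $r^{-1}$ is continuous on $J$, the continuous mapping theorem gives $\x\to_w X$.

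It is worth isolating why the limit is Stratonovich rather than It\^o, since this is the conceptual heart. At the deterministic level the substitution $\z=r(\x)$ obeys the ordinary chain rule with no correction term, and Stratonovich calculus is precisely the stochastic calculus that respects the ordinary chain rule. Hence identifying $X=r^{-1}(Z)$ as a solution of~\eqref{eq-SDE_mult} is immediate from the Stratonovich chain rule:
\begin{align*}
dX=(r^{-1})'(Z)\circ dZ=h(X)\circ\bigl(\sigma\,dW+\tilde F(Z)\,dt\bigr)=\sigma h(X)\circ dW+F(X)\,dt,
\end{align*}
using $(r^{-1})'(Z)=1/r'(X)=h(X)$ and $h(X)\tilde F(Z)=F(X)$. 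By contrast, in the discrete-time version (Theorem~\ref{thm-mult_map}) the analogue of this chain rule carries a correction, which is exactly the source of the extra drift $-\tfrac12 h h'\int_\Lambda f_0^2\,d\mu$ in~\eqref{eq-SDE_mult_map}.

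The main obstacle is the bookkeeping of existence and of the domains of $r$ and $r^{-1}$. Solutions of both~\eqref{eq-h} and the additive system for $\z$ can blow up in finite time, and $J=r(\R)$ need not be all of $\R$, so the continuous mapping step must be carried out on the event (of probability tending to one) that $Z$ remains in a fixed compact subinterval of $J$ on $[0,T]$; this is furnished by the assumed global existence of $X$, and amounts to transporting the localization already developed for Theorem~\ref{thm-reg} through the diffeomorphism $r$. For part~(b), where $h$ may vanish, $r$ is only defined on the maximal interval $I\ni\xi$ on which $h\neq0$, so $\z=r(\x)$ is meaningful only while $\x\in I$. Here I would use the assumed global existence of $Z$ in $r(I)$, together with the same localization, to show that $\x$ stays in $I$ with probability tending to one; this both validates the change of variables on the relevant time interval and yields the asserted global existence of $X=r^{-1}(Z)$, after which the argument of part~(a) applies verbatim.
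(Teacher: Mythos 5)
Your proposal is correct and follows essentially the same route as the paper: the substitution $\z=r(\x)$ with $r'=1/h$ turning~\eqref{eq-h} into an additive fast-slow system with $\tilde f(z,y)=f(r^{-1}z,y)/h(r^{-1}z)$, the Stratonovich chain rule (the paper's Proposition~\ref{prop-Ito}) identifying $Z=r(X)$ as the solution of~\eqref{eq-SDE_mult2_map}, an appeal to Theorem~\ref{thm-reg}, and the continuous mapping theorem via $r^{-1}$. Your added care about $J=r(\R)$ possibly being a proper subinterval of $\R$ and about the localization needed in part~(b) is sound and, if anything, slightly more explicit than the paper's own treatment.
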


Theorem~\ref{thm-mult} is proved by reducing it to Theorem~\ref{thm-reg}.
We require a preliminary elementary result.

\begin{prop} \label{prop-Ito}
Let $r:\R\to\R$ be a $C^2$ diffeomorphism.   Suppose that $W$ is
a one-dimensional unit Brownian motion and $\sigma^2>0$.
Consider the Stratonovich SDE 
\[
dX=\sigma(r'(X))^{-1}\circ dW+F(X)\,dt, \quad X(0)=\xi.
\]
  Then $X$ is a solution to this SDE if and only if
$Z=r(X)$ satisfies the SDE
\[
dZ=\sigma\,dW+\tilde F(Z)\,dt, \quad Z(0)=r(\xi),
\]
where $\tilde F=(r'F)\circ r^{-1}$.
\end{prop}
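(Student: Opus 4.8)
The plan is to exploit the defining feature of the Stratonovich integral, namely that it obeys the ordinary (first-order) chain rule of classical calculus, in contrast to the It\^o integral. Since $r:\R\to\R$ is a $C^2$ diffeomorphism, the map $X\mapsto Z=r(X)$ is a bijection between processes with $C^1$ inverse $r^{-1}$, so it suffices to show that $dX=\sigma(r'(X))^{-1}\circ dW+F(X)\,dt$ transforms into $dZ=\sigma\,dW+\tilde F(Z)\,dt$ under this change of variables; the converse direction then follows by applying the identical computation to $r^{-1}$ in place of $r$.

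First I would apply the Stratonovich chain rule to $Z=r(X)$, which gives $dZ=r'(X)\circ dX$ with no second-order correction term. Substituting the given SDE for $dX$ yields $dZ=r'(X)\bigl[\sigma(r'(X))^{-1}\circ dW+F(X)\,dt\bigr]=\sigma\circ dW+r'(X)F(X)\,dt$, where the factor $r'(X)$ multiplying the Stratonovich differential $\circ\, dW$ cancels against $(r'(X))^{-1}$.

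Next I would simplify the two resulting terms. Since the integrand $\sigma$ is a deterministic constant, the Stratonovich and It\^o integrals coincide, so $\sigma\circ dW=\sigma\,dW$. For the drift, writing $X=r^{-1}(Z)$ gives $r'(X)F(X)=(r'F)(r^{-1}(Z))=\tilde F(Z)$ by the definition $\tilde F=(r'F)\circ r^{-1}$. The initial condition transforms as $Z(0)=r(X(0))=r(\xi)$. This produces exactly the claimed SDE for $Z$.

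The only point requiring care is the justification of the Stratonovich chain rule itself, which is where the $C^2$ hypothesis on $r$ enters: passing to the equivalent It\^o formulation, $Z=r(X)$ picks up an It\^o second-order term $\tfrac12 r''(X)\,d\langle X\rangle$, while the conversion of the It\^o integral $r'(X)\,dX$ back into Stratonovich form contributes a quadratic-covariation term that cancels it exactly. I would either invoke this as a standard fact or verify the cancellation by a short direct computation; no blow-up or domain issues arise because $r$ is a global diffeomorphism of $\R$, so the correspondence $X\leftrightarrow Z$ and the existence of solutions are preserved on the whole line.
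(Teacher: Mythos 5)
Your proposal is correct and follows essentially the same route as the paper: both apply the Stratonovich chain rule to $Z=r(X)$, obtain $dZ=r'(X)\circ dX=\sigma\,dW+r'(X)F(X)\,dt=\sigma\,dW+\tilde F(Z)\,dt$, and handle the converse symmetrically. Your extra remarks on justifying the chain rule via the It\^o correction and its cancellation are sound but just make explicit what the paper invokes as standard.
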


\begin{proof}  
Suppose that $X$ satisfies the first SDE.
Since the Stratonovich integral satisfies the usual chain rule,
$Z=r(X)$ satisfies
\[
dZ=r'(X)\circ dX= \sigma\,dW+r'(X)F(X)\,dt=
 \sigma\,dW+\tilde F(Z)\,dt.
\]
The converse direction is identical.
\end{proof}

\begin{pfof}{Theorem~\ref{thm-mult}}
Write $h=1/r'$ and let $Z=r(X)$ where $X$ satisfies the 
SDE~\eqref{eq-SDE_mult}.    
The assumptions on $h$ guarantee that $r$ is a $C^2$ diffeomorphism.
By Proposition~\ref{prop-Ito}, $Z$ satisfies the SDE~\eqref{eq-SDE_mult2_map}.

Next, let
$\z=r\circ \x$.  The 
$\dotx$ equation in~\eqref{eq-h} becomes
\begin{align} \label{eq-ODE_z}
\dotz(t)  =
\epsilon^{-1}f_0(y^{(\epsilon)}(t))+\tilde f(\z(t),\y(t)),
\end{align}
where 
\begin{align*}
\tilde f(z,y)=
f(r^{-1}z,y)/h(r^{-1}z).
\end{align*}

Since $\tilde F(z)=\int_\Lambda \tilde f(z,y)\,d\mu(y)$ and
$\tilde f$ is locally Lipschitz, we are now in the situation of
Theorem~\ref{thm-reg},
and it follows that solutions of~\eqref{eq-ODE_z}
converge weakly to solutions of~\eqref{eq-SDE_mult2_map}.
That is, $r\circ\x\to_wr\circ X$.
Applying the $C^1$ map
$r^{-1}$, it follows from the continuous mapping theorem that
$\x\to_w X$ as required.
\end{pfof}

\begin{rmk}\label{rmk-higher}  As mentioned in the introduction, this result has a restricted extension to higher dimensions.
Consider the fast-slow equations~\eqref{eq-h} with $\x\in\R^d$
and suppose that $h:\R^d\to L(\R^d,\R^d)$ can be written as 
$h=(Dr)^{-1}$ for some $C^2$ diffeomorphism $r:\R^d\to\R^d$.
Then the substitution $\z=r(\x)$ yields the equation~\eqref{eq-ODE_z}
with $\tilde f(z,y)=h(r^{-1}z)^{-1}f(r^{-1}z,y)$.
Again it follows from Theorem~\ref{thm-reg} that $\z\to_w Z$
where $dZ=\sqrt\Sigma\,dW + \tilde F(Z)\,dt$, 
$\tilde F(z)=h(r^{-1}z)^{-1}F(r^{-1}z)$.
The change of variables formulas for Stratonovich SDEs shows that
$X=r^{-1}(Z)$ satisfies 
$dX=\sqrt\Sigma h(X)\circ dW+F(X)\,dt$.
Again $\x\to_w X$ by the continuous mapping theorem.
\end{rmk}

\section{Proofs of the results for maps}
\label{sec-maps}

In this section we prove the discrete time homogenization results
stated in the introduction.     By the argument in the proof of Theorem~\ref{thm-reg}, we may suppose from the outset that all relevant
Lipschitz constants are uniform on the whole of $\R^d$ and that $\lim_{\epsilon\to0} f(x,y,\epsilon)=f(x,y,0)$ uniformly on the whole of $\R^d\times\Lambda$.

Throughout, it is more convenient to work with the piecewise constant function
$\tilde x^{(\epsilon)}(t)=\x([t\epsilon^{-2}])$ rather than with the linearly interpolated function $\hat x^{(\epsilon)}(t)$.
(Here $[t\epsilon^{-2}]$ denotes the integer part of $t\epsilon^{-2}$.)
Since the process $\tilde x^{(\epsilon)}(t)=\x([t\epsilon^{-2}])$ is not continuous, we can no longer work within $C([0,\infty),\R^d)$.  Instead we 
  prove weak convergence 
in the space $D([0,\infty),\R^d)$ of c\`adl\`ag functions (right-continuous functions with left-hand limits, see for example~\cite[Chapter~3]{Billingsley1999})
with the supremum norm.

It is clear that
$\sup_{t\in[0,T]}|\hat x^{(\epsilon)}(t)-\tilde x^{(\epsilon)}(t)|\to0$ as $\epsilon\to0$.
Hence weak convergence of $\tilde x^{(\epsilon)}$ in
$D([0,\infty),\R^d)$ is equivalent
to weak convergence of $\hat x^{(\epsilon)}$ 
in $D([0,\infty),\R^d)$.
This in turn is equivalent to 
weak convergence of $\hat x^{(\epsilon)}$ 
in $C([0,\infty),\R^d)$ (see for example the last line of p.~124 of~\cite{Billingsley1999}).

\subsection{Proof of Theorem~\ref{thm-add_map}}
\label{sec-proof}

Write $d(\epsilon)=\sup_{x\in\R^d,y\in\Lambda}|f(x,y,\epsilon)-f(x,y,0)|$, so
$\lim_{\epsilon\to0}d(\epsilon)=0$.
First note that 
\[
\x(n)=\xi+\epsilon\sum_{j=0}^{n-1}f_0(y(j))+
\epsilon^2\sum_{j=0}^{n-1}f(\x(j),y(j),\epsilon).
\]
Hence 
\begin{align*}
\tilde x^{(\epsilon)}(t)& =\xi+W^{(\epsilon)}(t)+ 
\epsilon^2\sum_{j=0}^{[t\epsilon^{-2}]-1}f(x^{(\epsilon)}(j),y(j),0)
+K_1^{(\epsilon)}(t) \\ & =
\xi+W^{(\epsilon)}(t)+ K_1^{(\epsilon)}(t)+
K_2^{(\epsilon)}(t)+
\epsilon^2\sum_{j=0}^{[t\epsilon^{-2}]-1}F(\tilde x^{(\epsilon)}(\epsilon^2j)).
\end{align*}
where 
$W^{(\epsilon)}(t)=\epsilon\sum_{j=0}^{[t\epsilon^{-2}]-1}f_0(y(j))$,
$|K_1^{(\epsilon)}(t)|\le Td(\epsilon)$,
 and
\[
K_2^{(\epsilon)}(t)=
\epsilon^2\sum_{j=0}^{[t\epsilon^{-2}]-1}
\bigl(f(\x(j),y(j),0)-F(\x(j))\bigr).
\]
For $t$ an integer multiple of $\epsilon^2$,
the term $\epsilon^2\sum_{j=0}^{[t\epsilon^{-2}]-1}F(\tilde x^{(\epsilon)}(\epsilon^2j))$ is the Riemann sum of a piecewise constant function and is precisely
$\int_0^t F(\tilde x^{(\epsilon)}(s))\,ds$.
For general $t$, 
\[
\epsilon^2\sum_{j=0}^{[t\epsilon^{-2}]-1}F(\tilde x^{(\epsilon)}(\epsilon^2j))=
\int_0^t F(\tilde x^{(\epsilon)}(s))\,ds+K_3^{(\epsilon)}(t),
\]
where $K_3^{(\epsilon)}(t)\le \epsilon^2|f|_\infty$.
Altogether,
\[
\tilde x^{(\epsilon)}(t)=\xi+W^{(\epsilon)}(t)+K^{(\epsilon)}(t)+ \int_0^t F(\tilde x^{(\epsilon)}(s))\,ds,
\]
where $K^{(\epsilon)}=K_1^{(\epsilon)}+ K_2^{(\epsilon)}+ K_3^{(\epsilon)}$.

By exactly the same argument as in~\cite{MS11}, 
$K_2^{(\epsilon)}\to0$ in probability in $D([0,T],\R^d)$.
(For convenience the proof is reproduced in Appendix~\ref{sec-avg}.)
It follows that $W^{(\epsilon)}+
K^{(\epsilon)}\to \sqrt\Sigma W$ in $D([0,T],\R^d)$.
Now consider the continuous map $\mathcal{G}:D([0,T],\R^d)\to D([0,T],\R^d)$ 
given by $\mathcal{G}(u)=v$ where $v(t)=\xi+u(t)+ \int_0^t F(v(s))\,ds$.
Then $\tilde x^{(\epsilon)}=\mathcal{G}(W^{(\epsilon)}+K^{(\epsilon)})$,
so $\tilde x^{(\epsilon)}\to_w \mathcal{G}(\sqrt\Sigma W)=X$.
This completes the proof of Theorem~\ref{thm-add_map}.

\subsection{Proof of Theorem~\ref{thm-mult_map} and Proposition~\ref{prop-mult_map}}

\label{sec-mult_map}

Again, we reduce to the case where $\tilde F$ is globally Lipschitz and uniformly
continuous at $\epsilon=0$.   Also, we may suppose that $r''$ is uniformly continuous.

Write $Z=r(X)$ where $X$ satisfies the SDE~\eqref{eq-SDE_mult_map}.    
By Proposition~\ref{prop-Ito}, $Z$ satisfies 
\begin{align} \label{eq-SDE_Z_map}
dZ=\sigma dW+\tilde F(Z)\,dt,
\end{align}
where
\begin{align*}
 \tilde F(z)  
& =r'(r^{-1}z) F(r^{-1}z)
-\frac12 h'(r^{-1}z)\int_\Lambda f_0^2\,d\mu
\\ & =r'(r^{-1}z) F(r^{-1}z)
+\frac12r''(r^{-1}z) [r'(r^{-1}z)]^{-2}\int_\Lambda f_0^2\,d\mu.
\end{align*}

Define $\z(n)= r(\x(n))$.
Using Taylor's theorem to expand the $C^2$ map $r$, we obtain
\begin{align} \label{eq-r}
\nonumber & \z(n+1)-\z(n)  =  r(x^{(\epsilon)}(n+1))-r(x^{(\epsilon)}(n)) \\[.75ex]
 & \qquad\qquad  = r'(\x(n))(\x(n+1)-\x(n)) 
\\[.75ex]\nonumber  & 
 \qquad\qquad\qquad
\quad + \textstyle{\frac12} r''(\x(n))(\x(n+1)-\x(n))^2 
+o((\x(n+1)-\x(n))^2),
\end{align} 
where the last term is uniformly $o(\epsilon^2)$ since
$r''$ is uniformly continuous.

Substituting for $\x(n+1)-\x(n)$ using equation~\eqref{eq-map_mult},
equation~\eqref{eq-r} becomes
\begin{align*}
 z^{(\epsilon)}(n+1)-
z^{(\epsilon)}(n)  & = \epsilon f_0(y(n))+\epsilon^2 
\Bigl\{r'(\x(n))f(\x(n),y(n),0) \\ & \qquad\qquad+\frac12 r''(\x(n))[r'(\x(n))]^{-2}f_0(y(n))^2+o(1)\Bigl\}
\\ & = \epsilon f_0(y(n))+\epsilon^2 \tilde f(z^{(\epsilon)}(n),y(n),\epsilon),
\end{align*}
where
\[
\tilde f(z,y,\epsilon)=r'(r^{-1}z)f(r^{-1}z,y,0)+\frac12
r''(r^{-1}z)[r'(r^{-1}z)]^{-2}f_0(y)^2+o(1),
\]
uniformly in $z,y$ as $\epsilon\to0$.
Since $\tilde F(z)=\int_\Lambda\tilde f(z,y,0)\,d\mu(y)$, it
follows from Theorem~\ref{thm-add_map} that $\tilde z^{(\epsilon)}(t)=\z([t\epsilon^{-2}])$ 
converges weakly to solutions $Z$ of the SDE~\eqref{eq-SDE_Z_map}.
Applying the $C^1$ map
$r^{-1}$, it follows from the continuous mapping theorem that
$\tilde x^{(\epsilon)}\to_w X$ 
(and hence $\tilde x^{(\epsilon)}\to_w X$) as required.

\section{SDEs driven by stable L\'evy processes}
\label{sec-Levy}

In this section, we consider the situation where the fast dynamics is not sufficiently chaotic to support the WIP.   With reference to Remark~\ref{rmk-Young}, this occurs when the Young tower~\cite{Young99} modelling the map (or the Poincar\'e map in the case of flows) has nonsummable decay of correlations.  In this case, 
weak convergence to Brownian motion fails.
However there are instances where instead there is convergence to a stable
L\'evy process.

The prototypical examples are provided by Pomeau-Manneville intermittency 
maps~\cite{PomeauManneville80}.  For definiteness, consider the 
maps $g:[0,1]\to[0,1]$ studied by~\cite{LiveraniSaussolVaienti99}:
\begin{align}
g(y)=\begin{cases} y(1+2^\gamma y^\gamma), & y\in[0,\frac12)
\\ 2y-1, & y\in[\frac12,1] \end{cases}. \label{eq-PM}
\end{align}
For $\gamma\in(0,1)$, there exists a unique absolutely continuous invariant ergodic probability $\mu$, and correlations decay at the rate $n^{-(\gamma^{-1}-1)}$.
The attractor is $\Lambda=[0,1]$.

In particular, correlations are summable if and only if $\gamma<\frac12$,
and in this situation all of the results in the previous sections apply.
From now on, we suppose that $\gamma\in(\frac12,1)$.   
Suppose that $f_0:\Lambda\to\R^d$ is Lipschitz with
$\int_\Lambda f_0\,d\mu=0$, and assume further that $f_0(0)\neq0$.
Then Gou\"ezel~\cite{Gouezel04} proved that the central limit theorem fails and instead that
$n^{-\gamma}\sum_{j=0}^{n-1}f_0(y(j))$ converges in distribution to a 
stable law $Y$ of exponent $1/\gamma$.   
(More precisely, it follows from~\cite{Gouezel04} that if $c\in\R^d$ and 
$c\cdot f_0(0)\neq0$, then $n^{-\gamma}\sum_{j=0}^{n-1}c\cdot f_0(y(j))$ converges in distribution to a $1$-dimensional stable law of exponent $1/\gamma$.
Hence there is convergence in distribution to a $d$-dimensional random variable $Y$ and $c\cdot Y$ is stable of exponent $1/\gamma$ for all $c$.   By~\cite[Theorem 2.1.5(a) or (c)]{SamorodnitskyTaqqu94}, $Y$ is a $d$-dimensional stable distribution, and its exponent is $1/\gamma$ by~\cite[Theorem 2.1.2]{SamorodnitskyTaqqu94}.)

Let $G=G_{1/\gamma}$ denote the 
corresponding stable L\'evy process (independent and stationary increments with
$G(t)=_dt^\gamma Y$ for each $t$ and sample paths lying in $D([0,\infty),\R^d)$).   Then it follows from~\cite{MZsub} that
$W_n(t)=n^{-\gamma}\sum_{j=0}^{[nt]-1}f_0(y(j))$ converges weakly to $G$ in $D([0,\infty),\R^d)$ with the Skorokhod $\mathcal{M}_1$ topology.

We proceed to consider fast-slow systems where the fast dynamics satisfies weak convergence to a stable L\'evy process of exponent $1/\gamma$.
First, consider the fast-slow ODE
\begin{align*}
\dotx & =\epsilon^{\gamma-1}h(\x)f_0(\y)+f(\x,\y), \quad \x(0)=\xi \\
\doty & =\epsilon^{-1}g(\y), \quad \y(0)=\eta.
\end{align*}

If $h(x)\equiv1$, then exactly the same arguments as before yield that
$\x\to_w X$ where $X$ satisfies the SDE
\[
dX=dG+F(X)\,dt, \quad X(0)=\xi.
\]

If $h=1/r'$ is nontrivial and $d=1$, then the same argument as before
shows that $\x\to_w X$ where $X=r(Z)$ and $Z$ is the solution
of the SDE
\[
dZ=dG+\tilde F(Z)\,dt, \quad Z(0)=\xi,
\]
where $\tilde F=(F/h)\circ r^{-1}$.
Transforming back, it is immediate that $X$ satisfies the SDE
\begin{align} \label{eq-Marcus}
dX=h(X)\diamond dG+F(X)dt, \quad X(0)=\xi,
\end{align}
{\em provided} that the stochastic integral $h(X)\diamond dG$ satisfies the
usual chain rule.  For L\'evy processes, it turns out that neither the 
It\^o nor Stratonovich interpretation is suitable, and the correct integral
is due to Marcus~\cite{Marcus81} (see also~\cite{KurtzPardouxProtter95}).
See~\cite[p.~272]{Applebaum} for a discussion of the Marcus stochastic integral
and in particular for the chain rule~\cite[Theorem 4.4.28]{Applebaum}.

For maps, we consider
\[
\x(n+1)=\x(n)+\epsilon^\gamma h(\x(n))f_0(y(n))+\epsilon f(\x(n),y(n)),
\]
Set $\hat x^{(\epsilon)}(t)=\x([t\epsilon^{-1}])$.    If $h\equiv1$, we obtain again that
$\hat x^{(\epsilon)}$ converges weakly to solutions $X$ of the SDE
\[
dX=dG+F(X)\,dt, \quad X(0)=\xi.
\]
If $h$ is nontrivial and $d=1$, then we again obtain the Marcus SDE~\eqref{eq-Marcus}.
(Note that the second order expansion of $r$ yields terms of order $2\gamma$ which are negligible since $\gamma>\frac12$.  Hence the additional correction terms that arose in the discrete case for Brownian noise are absent for L\'evy noise.)

\section{Numerical validation}
\label{sec-numerics}

In this section, we illustrate Theorem~\ref{thm-mult_map} with a numerical simulation of a suitable fast-slow map.
For the fast dynamics we could consider a Pomeau-Manneville intermittent map
as in~\eqref{eq-PM}, with $\gamma\in[0,\frac12)$ so that the WIP is satisfied.  In order to satisfy the centering condition
$\int_\Lambda f_0\,d\mu=0$, it is more convenient to work
with the following modified version of the map in~\eqref{eq-PM}:
\begin{align}
g(y)=\begin{cases} y(1+2^\gamma y^\gamma), & y\in[0,\frac12)
\\ 1-2y, & y\in[\frac12,1] \\
-g(-y), & y\in[-1,0) \end{cases}. \label{eq-modPM}
\end{align}
Again, for $\gamma\in[0,1)$ there exists a unique absolutely continuous invariant ergodic probability measure $\mu$, Moreover, the WIP again holds for $\gamma\in[0,\frac12)$; we choose $\gamma=0.1$.  The attractor is $\Lambda=[-1,1]$.

Since the map $g:\Lambda\to\Lambda$ is odd, the probability measure $\mu$ is symmetric around the origin. Hence the condition $\int_\Lambda f_0\,d\mu=0$ is automatically satisfied provided $f_0:\Lambda\to\R$ is odd.
We choose 
\[
f_0(y)=y, \quad h(x)=x^\frac12, \quad
f(x,y,\epsilon)={\SMALL\frac12}({\SMALL\frac34}-x)y^2,
\]
so the slow dynamics is given by
\begin{align}
\x(n+1) &= \x(n) +\epsilon \x(n)^\frac12 y(n) +\epsilon^2{\SMALL\frac12}({\SMALL\frac34}-\x(n))y(n)^2.
\label{eq-slow}
\end{align}

According to Theorem~\ref{thm-mult_map}, rescaled solutions $\hat x^{(\epsilon)}(t)=\x(t\epsilon^{-2})$ of \eqref{eq-slow} converge weakly 
to solutions of the SDE
\begin{align} \nonumber
dX &= \sigma X^\frac12 \circ dW+\frac12\Bigl(\frac34-X\Bigr)\int_\Lambda y^2\,d\mu\, dt - \frac14\int_\Lambda y^2\,d\mu\,  dt 
 \\ \nonumber & = \sigma X^\frac12 \, dW+\frac12\Bigl(\frac34-X\Bigr)\int_\Lambda y^2\,d\mu\, dt + \frac14\Bigl(\sigma^2-\int_\Lambda y^2\,d\mu \Bigr) \, dt ,
 \\ & = \sigma X^\frac12 \, dW+\alpha(\beta-X)\, dt,
\label{ex_limit}
\end{align}
where $W$ is unit $1$-dimensional Brownian motion and 
\begin{align*}
& \SMALL \sigma^2=\int_\Lambda y(0)^2\,d\mu+2\sum_{n=1}^\infty \int_\Lambda y(n)y(0)\,d\mu=\lim_{n\to\infty}n^{-1}\int_\Lambda (\sum_{j=0}^{n-1}y(j))^2\,d\mu, \\
& \SMALL \alpha=\frac12\int_\Lambda y^2\,d\mu, \quad \beta=\frac14(1+\sigma^2/\alpha).
\end{align*}
This is the Cox-Ingersoll-Ross (CIR) model~\cite{CIR85,CIR85b} which has the closed form solution 
\begin{align} \label{eq-CIR}
X(t)=c(t)H(t),\quad c(t)=\frac{\sigma^2}{4\alpha}(1-e^{-\alpha t}),
\end{align}
where $H(t)$ is a noncentral $\chi$-squared distribution with $4\alpha\beta/\sigma^2$
degrees of freedom and noncentrality parameter 
$c(t)^{-1}e^{-\alpha t}\xi$.

A long time iteration of the map~\eqref{eq-modPM}, taking an ensemble average, yields the approximate values $\sigma^2=0.085$ and $\int_\Lambda y^2\,d\mu=0.319$,  and hence $\alpha=0.160$, $\beta=0.383$.

A consequence of weak convergence is convergence in distribution for each fixed $t>0$, namely that for any $a\in\R$, 
\[
\lim_{\epsilon\to0}\mu(\hat x^{(\epsilon)}(t)<a)= P(X(t)<a).
\]
We proceed to verify this result numerically with $t=10$
and the initial condition $\xi=\x(0)=X(0)=1$.

Computing the probability density function for $\hat x^{(\epsilon)}(10)$ from the numerical simulation of the full fast-slow system, and the limiting probability density function for $X(10)$ using the closed-form solution~\eqref{eq-CIR},
we obtain the results shown
in Figure~\ref{fig-1}.   We used ensembles consisting of $5,000,000$ realizations (though for the fast-slow system we found that $100,000$ realizations were ample).

In particular, Figure~\ref{fig-1} confirms our prediction regarding the drift term 
$\frac12(\frac34-X)\int_\Lambda y^2\,d\mu+ \frac14(\sigma^2-\int_\Lambda y^2\,d\mu)$ in the limiting SDE~\eqref{ex_limit}.   
Figure~\ref{fig-2} shows a comparison of this probability density function with those that would result from having the incorrect drift terms 
$\frac12(\frac34-X)\int_\Lambda y^2\,d\mu$ or
$\frac12(\frac34-X)\int_\Lambda y^2\,d\mu+ \frac14\sigma^2$
that arise when the limiting SDE is interpreted as being It\^o (as in the iid case) or Stratonovich (as in the continuous time case).

\begin{figure}
\centering
\includegraphics[width = 0.49\columnwidth, height = 5cm]{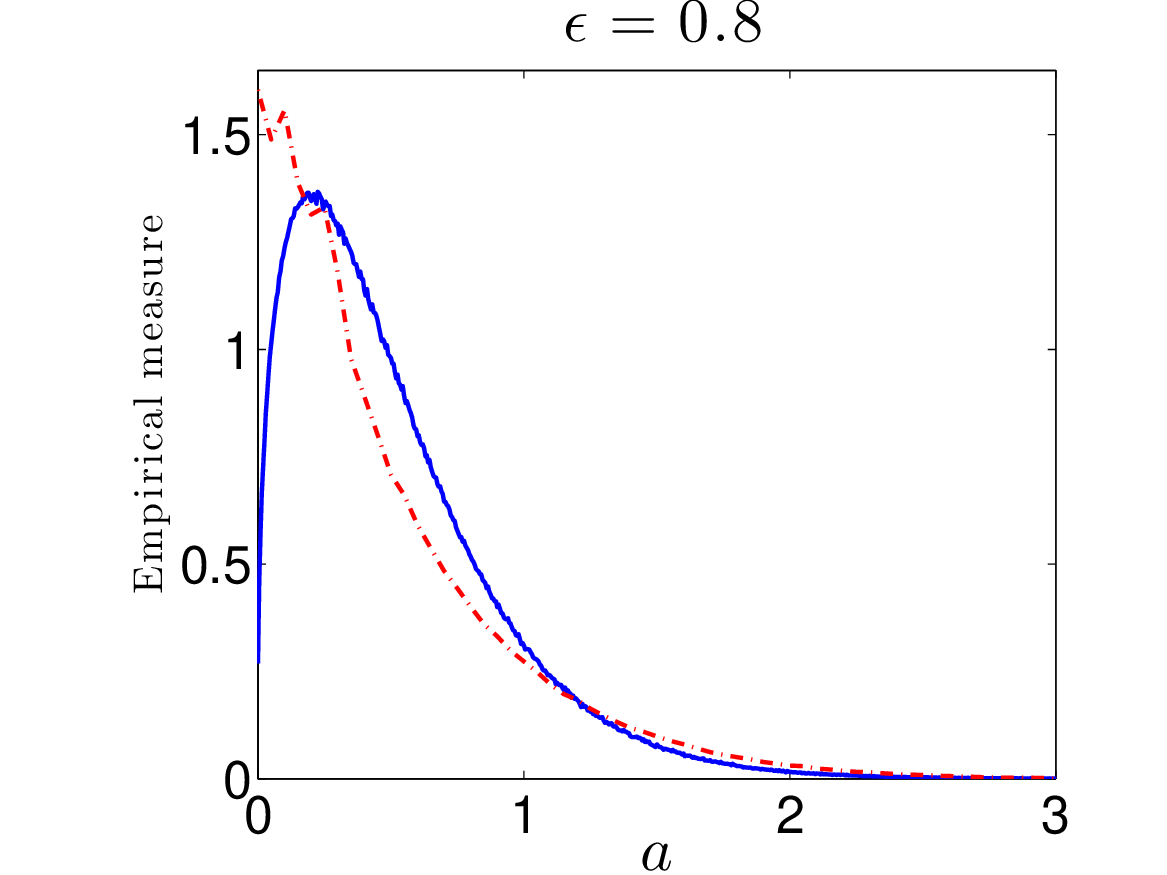}
\includegraphics[width = 0.49\columnwidth, height = 5cm]{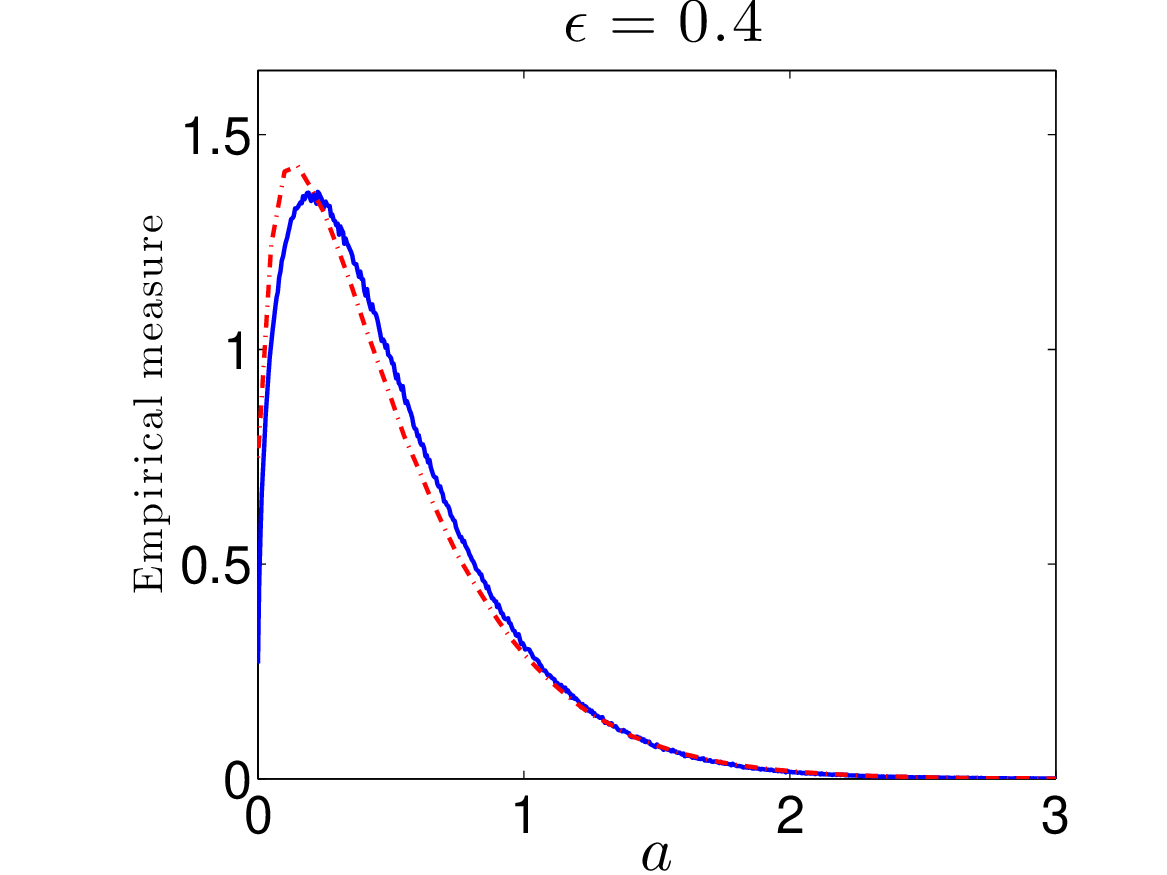}
\includegraphics[width = 0.49\columnwidth, height = 5cm]{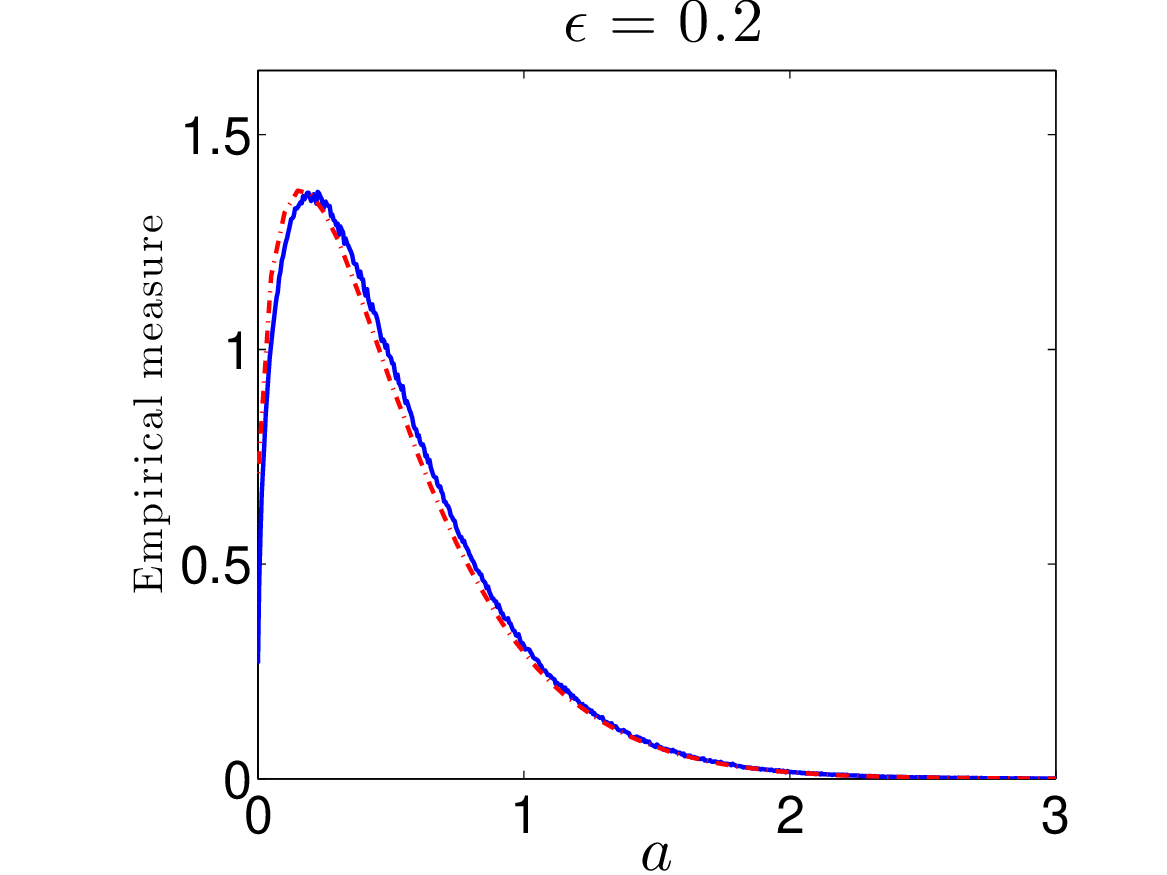}
\caption{Probability density function (empirical measure) at $t=10$ for the fast-slow map~\eqref{eq-modPM},~\eqref{eq-slow} with $\epsilon=0.8,0.4,0.2$ (dashed, red) and for the SDE limit~\eqref{ex_limit} (solid, blue).  We used ensembles consisting of $5,000,000$ realizations.}
\label{fig-1}
\end{figure}

\begin{figure}
\centering
\includegraphics[width = 0.98\columnwidth, height = 7cm]{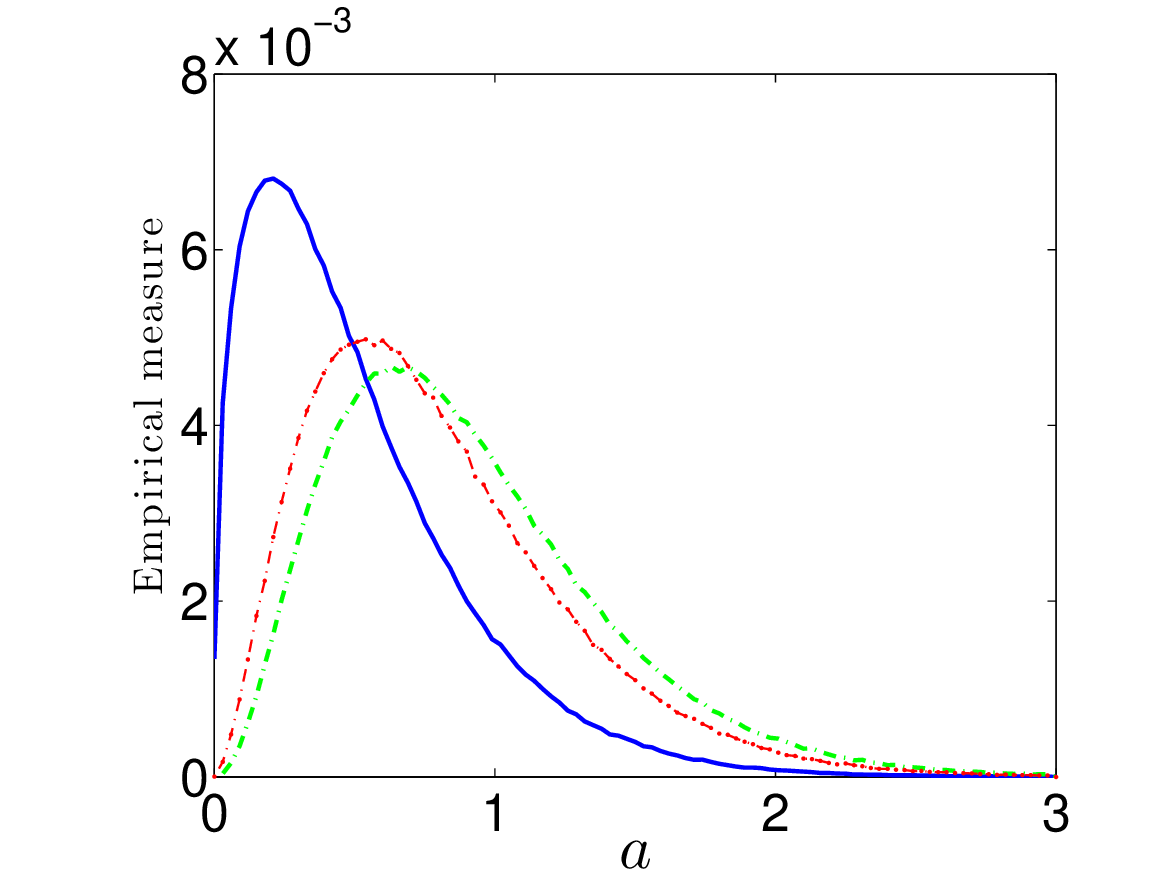}
\caption{Probability density function (empirical measure) at $t=10$ for the
limiting SDE with different drift terms corresponding to three different interpretations of the stochastic integral: the theoretically predicted drift term
in~\eqref{ex_limit} (left, blue) and those for the It\^o interpretation
(middle, red)
and the Stratonovich interpretation (right, green).
We used ensembles consisting of $5,000,000$ realizations.}
\label{fig-2}
\end{figure}

We also present numerical evidence for convergence of first moments
$\E(|\hat x^{(\epsilon)}(t)|)$.   This is not a direct consequence of Theorem~\ref{thm-mult_map}.   However, for each $t>0$,  Theorem~\ref{thm-mult_map} together with boundedness (as $\epsilon$ varies) of a higher moment implies~(eg.~\cite[Exercise~2.5, p.~86]{Durrett} that 
$\E(|\hat x^{(\epsilon)}(t)|)$ converges, as $\epsilon\to0$, to
\begin{align} \label{eq-mean_CIR}
\E(X(t))=\xi e^{-\alpha t}+\beta(1-e^{-\alpha t}).  
\end{align}
We verified numerically that $\E((\hat x^{(\epsilon)}(t))^2)$ is convergent and hence bounded, implying convergence of 
$\E(|\hat x^{(\epsilon)}(t)|)$ as demonstrated in Figure~\ref{fig-3}.

\begin{figure}
\centering
\includegraphics[width = 0.49\columnwidth, height = 7cm]{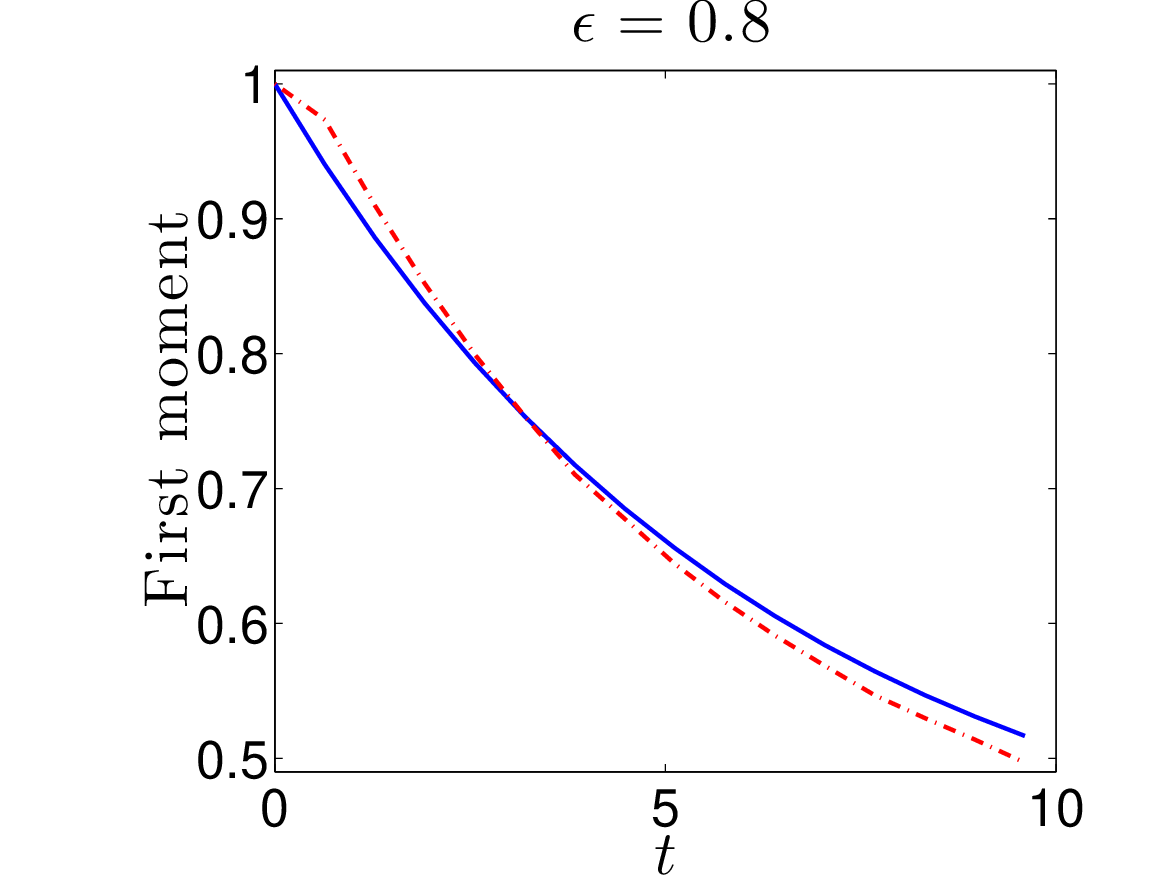}
\includegraphics[width = 0.49\columnwidth, height = 7cm]{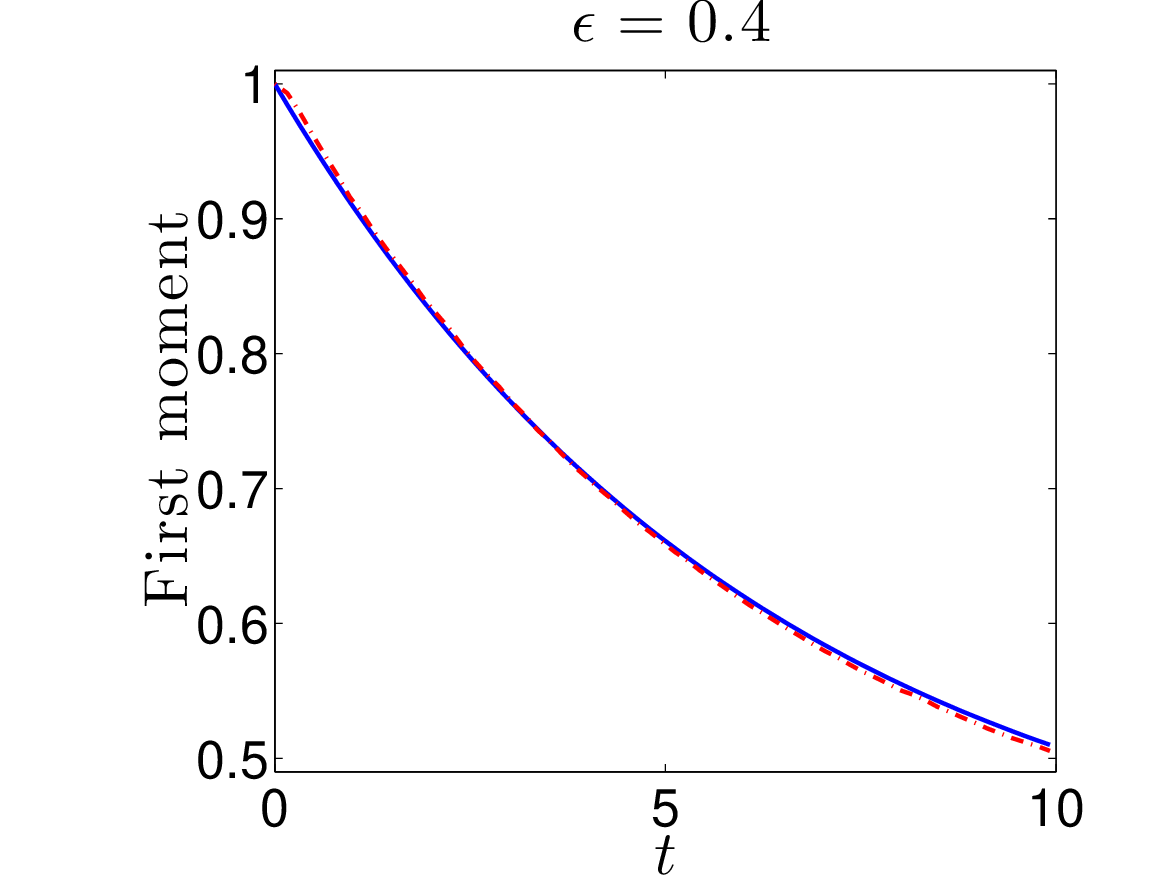}
\includegraphics[width = 0.49\columnwidth, height = 7cm]{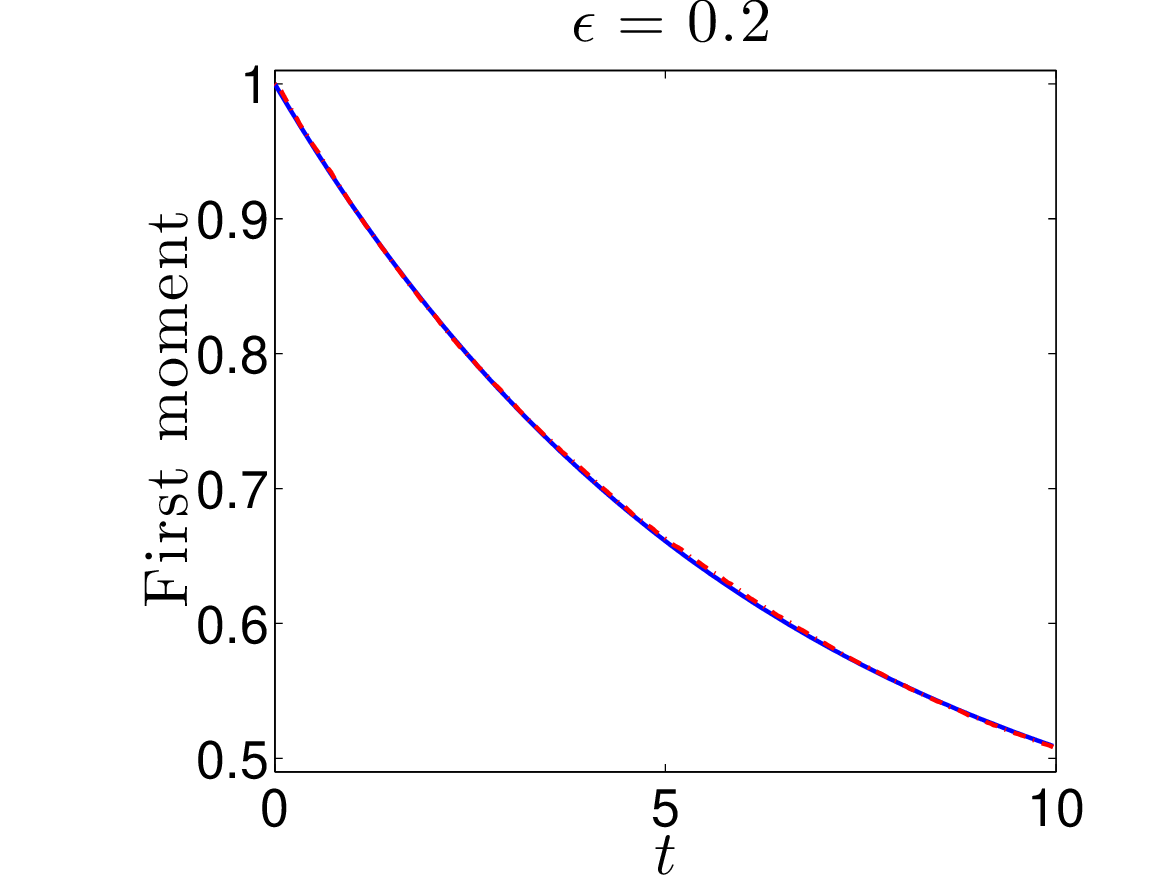}
\caption{First moment $0\le t\le 15$ for the fast-slow map~\eqref{eq-modPM},~\eqref{eq-slow} with $\epsilon=0.8,0.4,0.2$ (dashed, red) and for the SDE limit~\eqref{eq-mean_CIR} (solid, blue).  We used ensembles consisting of $100,000$ realizations for the fast-slow map.}
\label{fig-3}
\end{figure}

\section{Conclusions}
\label{sec-conc}

The paper~\cite{MS11} set out a programme for a rigorous investigation of homogenisation for fast-slow deterministic systems under very mild assumptions on the fast dynamics.    In this paper, we have extended these results in a number of ways:
\begin{itemize}
\item[1.]  Extension from continuous time to discrete time.
\item[2.]  Incorporation of one-dimensional multiplicative noise.
\item[3.]  Inclusion of situations where the SDE is driven by a stable
L\'evy process.
\item[4.]  Relaxation of regularity assumptions, in particular the requirement in~\cite{MS11} that certain vector fields are uniformly Lipschitz.
\end{itemize}

Items 2--4 require interpretation of certain stochastic integrals.   In the 
case of multiplicative noise in continuous time, the stochastic integrals are
of Stratonovich type as would be expected.    For discrete time, it was pointed out in~\cite{GivonKupferman04} that the integrals are neither It\^{o} nor Stratonovich.   We recover their result in a much more general context.    For multiplicative noise in the situation of item 3, where the SDE is L\'evy, we obtain Marcus stochastic integrals for both discrete and continuous time.

Important directions of future research include the analysis of higher-dimensional multiplicative noise and of fully coupled systems where the fast dynamics depends on the slow variables.

\appendix
\section{Averaging argument}
\label{sec-avg}

Let $K_2^{(\epsilon)}$ be the expression appearing in the proof of Theorem~\ref{thm-add_map} in Subsection~\ref{sec-proof}.
For completeness, we give the details of the argument that
$K_2^{(\epsilon)}\to0$ in probability in $D([0,T],\R^d)$.
The proof is identical to that in the continuous time context in~\cite{MS11}.
(Note that the published version of~\cite{MS11} contains an error that is corrected in the updated version on arXiv.  The published version of our paper replicated this error which is corrected in identical fashion below.)

\begin{lemma} \label{lem-K2}
$K_2^{(\epsilon)}\to0$ in probability in $D([0,T],\R^d)$.
\end{lemma}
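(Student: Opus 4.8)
The plan is to prove that $\E\sup_{0\le t\le T}|K_2^{(\epsilon)}(t)|\to0$ as $\epsilon\to0$, where the expectation is over the initial condition $\eta$ distributed according to $\mu$. Recall that
\[
K_2^{(\epsilon)}(t)=\epsilon^2\sum_{j=0}^{[t\epsilon^{-2}]-1}\bigl(f(\x(j),y(j),0)-F(\x(j))\bigr),
\]
so with $M=[T\epsilon^{-2}]$ the supremum equals $\epsilon^2\max_{0\le m\le M}\bigl|\sum_{j=0}^{m-1}(f(\x(j),y(j),0)-F(\x(j)))\bigr|$. The summand is an empirical fluctuation of $f(x,\cdot)$ about its mean $F(x)$, so the LDP (Proposition~\ref{prop-LDP}) should force cancellation. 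The difficulty, and the main obstacle, is that $f$ is evaluated along the orbit at the random, orbit-dependent point $\x(j)$, whereas Proposition~\ref{prop-LDP} controls the fluctuation only for a \emph{fixed} $x$; decoupling $\x(j)$ from $y(j)$ is the crux.

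To decouple them I would fix an integer block length $N$ and split $\{0,\dots,M-1\}$ into consecutive blocks of length $N$. Since $f$ is bounded, the incomplete final block contributes at most $2\epsilon^2N|f|_\infty$ to the supremum, which vanishes as $\epsilon\to0$ for fixed $N$. On each full block $nN\le j<(n+1)N$ I would freeze the slow variable, replacing $\x(j)$ by $\x(nN)$. Because $f_0$ and $f$ are bounded, $|\x(j)-\x(nN)|\le CN\epsilon$ throughout the block, so by the global Lipschitz bounds on $f$ and $F$ the total freezing error, summed over the at most $T\epsilon^{-2}/N$ blocks and multiplied by $\epsilon^2$, is $O(TN\epsilon)$ and again vanishes for fixed $N$. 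Since the running partial sums are dominated by the block-wise triangle inequality, after these two reductions it remains to bound $\epsilon^2N\sum_n\E D_n$, where $D_n=\bigl|\frac1N\sum_{j=nN}^{(n+1)N-1}f(\x(nN),y(j),0)-F(\x(nN))\bigr|$ and the prefactor $\epsilon^2N\cdot(\text{number of blocks})$ tends to $T$; thus it suffices to bound $\E D_n$ uniformly in $n$.

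To bound $\E D_n$ despite $\x(nN)$ being random, I would localise: fix a large ball $K=B_\rho(\xi)$ and write $D_n\le\sup_{x\in K}\psi_N^{(n)}(x)+2|f|_\infty\mathbf{1}_{\{\x(nN)\notin K\}}$, where $\psi_N^{(n)}(x)=\bigl|\frac1N\sum_{j=nN}^{(n+1)N-1}f(x,y(j),0)-F(x)\bigr|$. As $\psi_N^{(n)}$ is Lipschitz in $x$ with a uniform constant, a finite $\delta$-net $\{x_1,\dots,x_{k(\delta)}\}$ of $K$ gives $\sup_{x\in K}\psi_N^{(n)}\le\max_i\psi_N^{(n)}(x_i)+2L\delta$; taking expectations, bounding the maximum by the sum and applying Proposition~\ref{prop-LDP} to each fixed $x_i$ (the bound there being uniform in $n$ by invariance of $\mu$) yields
\[
\E D_n\le k(\delta)\bigl(a+2|f|_\infty b(a,N)\bigr)+2L\delta+2|f|_\infty\,\sup_{n,\epsilon}\mu(\x(nN)\notin K).
\]
The last term is controlled by uniform tightness of $\{\x(nN):n\le M,\ \epsilon>0\}$, which follows from boundedness of the $\epsilon^2$ drift together with tightness of $\epsilon\sum_{j<nN}f_0(y(j))$ supplied by the WIP.

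Assembling the pieces, for fixed $N,a,\delta,\rho$ I obtain
\[
\limsup_{\epsilon\to0}\E\sup_{0\le t\le T}|K_2^{(\epsilon)}(t)|\le T\Bigl[k(\delta)\bigl(a+2|f|_\infty b(a,N)\bigr)+2L\delta+2|f|_\infty\,\sup_{n,\epsilon}\mu(\x(nN)\notin K)\Bigr].
\]
I would then send the parameters to their limits in the order $\rho\to\infty$ (killing the tightness term), then $\delta\to0$ (killing $2L\delta$ and fixing the net cardinality $k(\delta)$), and finally, for this fixed $k(\delta)$, $N\to\infty$ and $a\to0$, invoking the LDP property $b(a,N)\to0$. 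The delicate point is precisely this order of limits, which is what lets the fixed net cardinality $k(\delta)$ be absorbed against the LDP decay; everything else is routine given the reductions to uniformly bounded, globally Lipschitz coefficients made at the start of Section~\ref{sec-maps}.
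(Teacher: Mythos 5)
Your argument is correct, and its skeleton --- partition into blocks, freeze the slow variable at the start of each block, control the freezing error by the uniform Lipschitz bounds, and invoke Proposition~\ref{prop-LDP} on the frozen block averages --- is the same as the paper's. But it differs genuinely at the step you identify as the crux. The paper uses blocks of $\delta\epsilon^{-2}$ iterates with $\delta$ coupled to $\epsilon$ (ultimately $\delta=\epsilon^{3/2}$), so that the freezing error $O(T\delta\epsilon^{-1})=O(T\epsilon^{1/2})$ and the LDP error $b(a,\delta\epsilon^{-2})=b(a,\epsilon^{-1/2})$ vanish simultaneously in the single limit $\epsilon\to0$, and it applies Proposition~\ref{prop-LDP} \emph{directly} with the random frozen point $\x(n\delta\epsilon^{-2})$ substituted for the fixed $x$, on the strength of $b(a,N)$ being independent of $x$; there is no localization, no net, and no tightness argument anywhere in its proof. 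You instead keep the block length $N$ fixed as $\epsilon\to0$ (equally viable: freezing error $O(TN\epsilon)$, prefactor $\epsilon^2 N\times(\text{number of blocks})\to T$) and, before using the LDP, you reduce the random point $\x(nN)$ to finitely many \emph{fixed} net points inside a compact ball, absorbing the net cardinality $k$ against $b(a,N)\to0$ through your cascade of quantifiers. Your extra machinery addresses a real subtlety that the paper's one-line application elides: since the dynamics is deterministic, $\x(nN)$ and the block $(y(j))_{j\ge nN}$ are both functions of the same initial condition $\eta$, so a large deviation bound holding for each fixed $x$ does not formally transfer to an $\eta$-dependent argument without some uniformization, and your Lipschitz-in-$x$ net device is precisely the standard way to supply it. What each route buys: the paper's coupled choice $\delta=\epsilon^{3/2}$ is shorter and needs no compactness input at all, while your version is more conservative at the delicate point but pays for it with the tightness hypothesis and the bookkeeping. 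Two small repairs to your write-up: the tail term in your displayed bound should carry $\limsup_{\epsilon\to0}\sup_n$ rather than $\sup_{n,\epsilon}$, since the WIP yields tightness of $\sup_t|W^{(\epsilon)}(t)|$ only as $\epsilon\to0$ (for $\epsilon$ bounded away from $0$ one uses instead the crude deterministic bound $|\x(m)-\xi|\le \epsilon m|f_0|_\infty+\epsilon^2 m|f|_\infty$); and your net cardinality $k$ depends on the ball radius $\rho$ as well as on the mesh, so the parameters must be fixed in the order $\rho$, mesh, $a$, $N$ --- which is indeed the order your final limit scheme implements.
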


\begin{proof}
Define $\tilde f(x,y)=f(x,y,0)-F(x)$ and note that $|\tilde f|_\infty\le 2|f|_\infty$
and $\Lip(\tilde f)\le 2\Lip(f)$.   Then
$K_2^{(\epsilon)}(t) =\epsilon^2\sum_{0\le j<[t\epsilon^{-2}]} \tilde f(\x(j),y(j))$.
Let $N=[t/\epsilon^{3/2}]$ and write $K_2^{(\epsilon)}(t)=K_2^{(\epsilon)}(N\epsilon^{3/2})+I_0$
where $I_0=\epsilon^2\sum_{N\epsilon^{-1/2}\le j<t\epsilon^{-2}} \tilde f(\x(j),y(j))$.
We have
\begin{align} \label{eq-maps_I0}
|I_0|\le (t-N\epsilon^{3/2})|\tilde f|_\infty \le 2|f|_\infty \epsilon^{3/2}.
\end{align}
We now estimate $K_2^{(\epsilon)}(N\epsilon^{3/2})$ as follows:
\begin{align*}
K_2^{(\epsilon)}(N\epsilon^{3/2})&= \epsilon^2\sum_{n=0}^{N-1} 
\sum_{n\epsilon^{-1/2}\le j<(n+1)\epsilon^{-1/2}} 
\tilde f(\x(j),y(j))\\
& =\epsilon^2\sum_{n=0}^{N-1}
\sum_{n\epsilon^{-1/2}\le j<(n+1)\epsilon^{-1/2}} 
\bigl(\tilde f(\x(j),y(j))-\tilde f(\x(n\epsilon^{-1/2}),y(j))\bigr) \\ &\qquad\qquad +\epsilon^2\sum_{n=0}^{N-1}
\sum_{n\epsilon^{-1/2}\le j<(n+1)\epsilon^{-1/2}} 
\tilde f(\x(n\epsilon^{-1/2}),y(j)) \\
& = I_1+I_2.
\end{align*}

For $n\epsilon^{-1/2}\le j<(n+1)\epsilon^{-1/2}$, we have
$|\x(j)-\x(n\epsilon^{-1/2})| \le (\epsilon|f_0|_{\infty}+\epsilon^2|f|_{\infty})\epsilon^{-1/2}$.
Hence
\begin{align} \label{eq-maps_I1}
|I_1| & \le N\epsilon^{3/2} \Lip(\tilde f)(|f_0|_{\infty}+|f|_{\infty})\epsilon^{1/2}  \le 
2\Lip(f)(|f_0|_{\infty}+|f|_{\infty})T\epsilon^{1/2}.
\end{align}

Next,
\begin{align*}
I_2 & =\epsilon^2\sum_{n=0}^{N-1} 
\sum_{n\epsilon^{-1/2}\le j<(n+1)\epsilon^{-1/2}} 
\tilde f(\x(n\epsilon^{-1/2}),y(j))
=\epsilon^{3/2}\sum_{n=0}^{N-1}J_n,
\end{align*}
where
\begin{align*}
J_n  &= \epsilon^{1/2}
\sum_{n\epsilon^{-1/2}\le j<(n+1)\epsilon^{-1/2}} 
\tilde f(\x(n\epsilon^{-1/2}),y(j)).
\end{align*}
Hence
\begin{align} \label{eq-J}
|I_2|\le \epsilon^{3/2}\sum_{n=0}^{[T\epsilon^{-3/2}]-1}|J_n|.
\end{align}

For $u\in\R^d$ fixed, we define
\[
\tilde J_n(u)=\epsilon^{1/2}  \sum_{n\epsilon^{-1/2}\le j<(n+1)\epsilon^{-1/2}} 
\tilde f(u,y(j))
=\epsilon^{1/2}\sum_{n\epsilon^{-1/2}\le j<(n+1)\epsilon^{-1/2}} 
A_u\circ g^j
\]
where $A_u(y)=\tilde f(u,y)$.
Note that $\tilde J_n(u)=\tilde J_0(u)\circ g^{[n\epsilon^{-1/2}]}$,
and so
$\E|\tilde J_n(u)| =\E|\tilde J_0(u)|$.
By the ergodic theorem,
$\E|\tilde J_0(u)|\to0$ as $\epsilon\to0$ for each $u$.

Let $Q>0$ and write $I_2=M_{Q,1}+M_{Q,2}$ where
\begin{align*}
& M_{Q,1}=I_21_{B_\epsilon(Q)}, \quad
M_{Q,2}=I_21_{B_\epsilon(Q)^c}, \quad
 B_\epsilon(Q)=\bigl\{\max_{[0,T]}|\x|\le Q\bigr\}.
\end{align*}

For any $a>0$, there exists a finite subset $S\subset\R^d$ such that
$\operatorname{dist}(x,S)\le a/(2\Lip(f))$ for any $x$ with $|x|\le Q$.
Then for all $n\ge0$, $\epsilon>0$,
\[
1_{B_\epsilon(Q)}|J_n|\le \sum_{u\in S}|\tilde J_n(u)|+a.
\]
Hence by~\eqref{eq-J},
\begin{align*}
 \E\max_{[0,T]}|M_{Q,1}|
 & \le \epsilon^{3/2}\sum_{n=0}^{[T\epsilon^{-3/2}]-1}
\sum_{u\in S}\E|\tilde J_n(u)|+Ta
\\ & = \epsilon^{3/2}\sum_{n=0}^{[T\epsilon^{-3/2}]-1}
\sum_{u\in S}\E|\tilde J_0(u)|+Ta
  \\ & \le T \sum_{u\in S}\E|\tilde J_0(u)|+Ta.
\end{align*}
Since $a>0$ is arbitrary, we obtain for each fixed $Q$ that
$\max_{[0,T]}|M_{Q,1}|\to0$ in $L^1$, and hence in probability, as $\epsilon\to0$.

Next, since $\x-W^{(\epsilon)}$ is bounded on $[0,T]$, for $Q$ sufficiently large
\[
\mu\bigl\{\max_{[0,T]}|M_{Q,2}|>0\bigr\}\le 
\mu\bigl\{\max_{[0,T]}|\x|\ge Q\bigr\}\le 
\mu\bigl\{\max_{[0,T]}|W^{(\epsilon)}|\ge Q/2\bigr\}.
\]
Fix $c>0$.  Increasing $Q$ if necessary, we can arrange that
$\mu\{\max_{[0,T]}|\sqrt\Sigma W|\ge Q/2\}<c/4$.
By the continuous mapping theorem,
$\max_{[0,T]}|W^{(\epsilon)}|\to_d \max_{[0,T]}|\sqrt\Sigma W|$.
Hence there exists $\epsilon_0>0$ such that
$\mu\{\max_{[0,T]}|W^{(\epsilon)}|\ge Q/2\}<c/2$ for all $\epsilon\in(0,\epsilon_0)$.
For such $\epsilon$,
\[
\mu\bigl\{\max_{[0,T]}|M_{Q,2}|>0\bigr\}<c/2.
\]
Shrinking $\epsilon_0$ if necessary, we also have that
$\mu\{\max_{[0,T]}|M_{Q,1}|>c/2\}<c/2$.
Hence $\mu\{\max_{[0,T]}|I_2|>c\}<c$, and so
$\max_{[0,T]}|I_2|\to0$ in probability.
Combining this with
estimates~\eqref{eq-maps_I0} and~\eqref{eq-maps_I1},
we obtain that 
$\max_{[0,T]}|K_2^{(\epsilon)}|\to0$ in probability as required.
\end{proof}

\paragraph{Acknowledgements} 
The research of GAG was supported in part by ARC grant FT 0992214.
The research of IM was supported in part by EPSRC Grant EP/F031807/1 held at the University of Surrey.
IM is grateful to the hospitality at the University of Sydney where this research commenced in 2011.
We are grateful to Ben Goldys, David Kelly and especially Andrew Stuart for helpful discussions, and
also to the referees for several helpful comments and suggestions.

\end{document}